\numberwithin{equation}{section}
\title{Kodaira fibrations,  K\"ahler groups, and finiteness properties}
\author{Martin R. Bridson}
\address{Martin R. Bridson, Mathematical Institute, Andrew Wiles Building, University of Oxford, Oxford OX2 6GG, EU}
\email{bridson@maths.ox.ac.uk}
\author{Claudio Llosa Isenrich}
\address{Claudio Llosa Isenrich, Laboratoire de Math\' ematiques d'Orsay, Univ. Paris-Sud, CNRS, Universit\' e Paris-Saclay, 91405 Orsay, France}
\email{claudio.llosa-isenrich@math.u-psud.fr}
\thanks{Bridson is funded by a Wolfson Research Merit Award from the Royal Society.
 Llosa Isenrich is supported by an EPSRC Research Studentship and by the German National Academic Foundation}
\keywords{K\"ahler groups, homological finiteness properties, Kodaira fibrations}
\subjclass[2010]{32J27, 20J05 (32Q15, 20F65)}
\begin{document}

\newcommand{\QQ}{{\mathds Q}}
\newcommand{\RR}{{\mathds R}}
\newcommand{\NN}{{\mathds N}}
\newcommand{\ZZ}{{\mathds Z}}
\newcommand{\del}{{\partial}}
\def\C{{\mathds C}}
\def\S{\Sigma}
\def\F{{\mathds F}}
\def\FF{{\mathcal F}}
\def\nn{{\bf N}}
\def\aut{{\rm{Aut}}}
\def\inn{{\rm{Inn}}}
\def\out{{\rm{Out}}}
\def\Mod{{\rm{Mod}}}
\def\isom{{\rm{Isom}}}
\def\mcg{{\rm{MCG}}}
\def\ker{{\rm{ker }}}
\def\G{\Gamma}
\def\g{\gamma}
\def\L{\Lambda}
\def\Z{{\mathds{Z}}}
\def\H{{\mathds{H}}}

\theoremstyle{plain}
\newtheorem{theorem}{Theorem}[section]
\newtheorem{acknowledgements}[theorem]{Acknowledgements}
\newtheorem{claim}[theorem]{Claim}
\newtheorem{conjecture}[theorem]{Conjecture}
\newtheorem{corollary}[theorem]{Corollary}
\newtheorem{exercise}[theorem]{Exercise}
\newtheorem{lemma}[theorem]{Lemma}
\newtheorem{proposition}[theorem]{Proposition}
\newtheorem{question}{Question}
\newtheorem{addendum}[theorem]{Addendum}

\theoremstyle{definition}
\newtheorem{remark}[theorem]{Remark}
\newtheorem*{acknowledgements*}{Acknowledgements}
\newtheorem{example}[theorem]{Example}
\newtheorem{definition}[theorem]{Definition}

\renewcommand{\proofname}{Proof}

\begin{abstract}
We construct classes of K\"ahler groups that do not have finite classifying spaces 
and are not commensurable to subdirect products of surface groups. Each of these groups is
the fundamental group of the generic fibre of a holomorphic map 
from a product of Kodaira fibrations onto an elliptic curve. 
\end{abstract}

\maketitle

\section{Introduction}

A \textit{K\"ahler group} is a group that can be realised as the fundamental group of a compact K\"ahler manifold. The question of which finitely presented groups are K\"ahler was raised by Serre in the 1950s. It has been a topic of active research ever since, but a putative classification remains a distant prospect and  constructions of novel examples are surprisingly rare.
For an overview of what is known see \cite{ABCKT-95} and \cite{Bur-10}.

In this paper we present a new technique for constructing K\"ahler groups. By applying this technique to Kodaira fibrations, we obtain K\"ahler groups
that do not have finite classifying spaces. We also develop a criterion for deciding when these groups are commensurable with residually free groups.

A group $G$ is of \textit{type} $\mathcal{F}_r$ if it has a classifying space $K(G,1)$ with finite $r$-skeleton. 
The first example of a finitely presented group that is not of
type $\FF_3$ was given by Stallings in 1963 \cite{Sta-63}. His example is a subgroup of a direct product of three free groups. Bieri subsequently constructed, for each positive integer $n$, a subgroup $B_n$ of a direct product of $n$ free groups such that $B_n$ is of type $\FF_{n-1}$ but not of type $\FF_n$; each $B_n$ is the
kernel of a map from the ambient direct product to an abelian group. The study of higher finiteness properties of 
discrete groups is a very active field of enquiry, with generalisations of subgroups of products of free groups playing a central role, e.g. \cite{BesBra-97}, \cite{BriHowMilSho-09, BriHowMilSho-13}. In particular, it has been recognised that the finiteness properties of subgroups in direct products of surface groups (more generally, residually-free groups) play a dominant role in determining the structure of these subgroups  \cite{BriHowMilSho-09}. In parallel, it has been recognised, particularly following the work of Delzant and Gromov \cite{DelGro-05},  that subgroups of direct products of surface
groups play an important role in the investigation of K\"ahler groups (see also \cite{Py-13, DelPy-16}).

Given this context, it is natural that the first examples of K\"ahler groups with exotic finiteness properties
should have been constructed as the kernels of maps from a product of hyperbolic surface groups to an abelian group.
This breakthrough was achieved by Dimca, Papadima and Suciu \cite{DimPapSuc-09-II}. Further examples
were constructed by Biswas, Mj and Pancholi \cite{BisMjPan-14} and by Llosa Isenrich \cite{Llo-16-II}. Our main purpose here is to
construct examples of a different kind. 

A {\em Kodaira fibration} (also called a {\em regularly fibred surface}) is a compact complex surface $X$ that admits a
holomorphic submersion onto a smooth complex curve. Topologically, $X$ is the total space of a smooth fibre bundle
whose base and fibre are closed 2-manifolds (with restrictions on the holonomy).
These complex surfaces bear Kodaira's name because he \cite{Kod-67} (and independently Atiyah \cite{Ati-69}) constructed specific non-trivial examples in order to prove that the signature is
not multiplicative in smooth fibre bundles. Kodaira fibrations should not be confused with Kodaira surfaces in the sense of \cite[Sect. V.5]{BarHulPetVdV-04}, which are complex surfaces of Kodaira dimension zero that are never K\"ahler.

The new classes of K\"ahler groups that we shall
construct will appear as the fundamental groups of generic fibres
of certain holomorphic maps from a product of Kodaira fibrations to an elliptic curve. The first and
most interesting family arises from a detailed construction of complex surfaces of positive signature
that is adapted from Kodaira's original construction \cite{Kod-67}. In fact, our surfaces are
diffeomorphic to those of Kodaira but have a different complex structure.
The required control over the finiteness properties
of these examples comes from the second author's  work on products of branched covers of elliptic curves
\cite{Llo-16-II}, which in turn builds on \cite{DimPapSuc-09-II}.

In order to  obviate the concern that our groups might be disguised perturbations of known examples,
we prove that no subgroup of finite index can be embedded in a direct product of surface groups. We do this
by proving that any homomorphism from the subgroup to a residually-free group must have infinite
kernel (Section 6).

\begin{theorem} \label{thmNonCommIntro}
For each $r\geq 3$ there exist Kodaira fibrations $X_i$, $i=1,\dots, r$, and a holomorphic map from $X=X_1\times \cdots \times X_r$ onto an elliptic curve $E$, with generic fibre $\overline{H}$, such that the sequence
\[
 1 \rightarrow \pi_1 \overline{H}\rightarrow \pi_1 X\rightarrow \pi_1 E \rightarrow 1
\]
is exact and $\pi_1 \overline{H}$ is a K\"ahler group that is of type $\mathcal{F}_{r-1}$ but not $\mathcal{F}_r$. 

Moreover, no subgroup of finite index in $\pi_1\overline{H}$ embeds in a direct product of surface groups.
\end{theorem}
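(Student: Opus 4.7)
The plan is to proceed in four stages. In Stage one, I would construct each $X_i$ as a Kodaira fibration $X_i \to B_i$ that additionally carries a surjective holomorphic map $p_i\colon X_i \to E$ onto a fixed elliptic curve. Following Kodaira's original construction [Kod-67], I would start from a suitable branched covering $C \to E$ and build $X_i$ as a further branched cover of $C \times E$ (or of a related product), choosing the branch locus so that one projection descends to the Kodaira fibration $X_i \to B_i$ while the other descends to $p_i$. By keeping the topological data close to Kodaira's, the resulting surfaces will be diffeomorphic to Kodaira's originals but endowed with a different complex structure compatible with the extra fibration to $E$.

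In Stage two, I would set $X = X_1 \times \cdots \times X_r$ and define $f\colon X \to E$ as the composition of $p_1 \times \cdots \times p_r$ with the group-law addition $\mu\colon E^r \to E$. A generic fibre $\overline{H}$ is a smooth complex submanifold of the compact K\"ahler manifold $X$, so $\overline{H}$ is itself compact K\"ahler and $\pi_1\overline{H}$ is a K\"ahler group. The exact sequence of fundamental groups then follows from the long exact homotopy sequence of the smooth fibration obtained by restricting $f$ over a Zariski-open subset of $E$, using $\pi_2 E = 0$ and the surjectivity of $f_*$, which reduces to checking that the images of the $(p_i)_*$ jointly generate $\pi_1 E \cong \Z^2$.

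Stage three is the main technical heart of the argument: deduce the finiteness properties of $\pi_1\overline{H}$ by invoking the framework of [Llo-16-II], which extends [DimPapSuc-09-II] from subdirect products of surface groups to kernels of maps from products of K\"ahler manifolds to an elliptic curve. The homomorphism $f_*$ is essentially the sum of the $(p_i)_*$, so the finiteness properties of its kernel are governed by the BNSR (Bieri--Neumann--Strebel--Renz) invariants of the factors $\pi_1 X_i$ along the characters induced by $p_i$. What must be verified is a non-degeneracy condition on each $p_i$: that $(p_i)_*$ surjects onto $\pi_1 E$ and that its restriction to the fundamental group of a Kodaira fibre is non-trivial, so $p_i$ does not factor through $B_i$. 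Assuming this, the cited machinery outputs $\FF_{r-1}$ but not $\FF_r$ for $\pi_1\overline{H}$. The principal obstacle will be verifying these non-degeneracy conditions intrinsically from the construction in Stage one.

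Stage four, carried out in Section 6, handles the non-embedding statement. The strategy is to show that every homomorphism from a finite-index subgroup $H \leq \pi_1\overline{H}$ to a residually-free group has infinite kernel; since direct products of surface groups are residually free, this rules out the putative embeddings. One reduces to the case where the target is a limit group (residually-free groups embed in direct products of limit groups) and exploits the fact that $H$ intersects the fibre groups of the constituent Kodaira fibrations in finite-index subgroups, together with the rigidity of surface-by-surface extensions mapping to limit groups, to produce the required infinite kernel. This step is delicate because it must rule out every finite-index subgroup of $\pi_1 \overline{H}$ and every possible homomorphism simultaneously, but once the appropriate rigidity statement for Kodaira-fibration groups is in place it should proceed rather uniformly.
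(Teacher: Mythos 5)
Your Stage two is where the first genuine gap appears. Restricting $f$ over a Zariski-open subset $E^{\circ}\subset E$ only gives an exact sequence ending in $\pi_1 E^{\circ}$, which is free, and it gives no control at all on injectivity of $\pi_1\overline{H}\to\pi_1 X$: for a holomorphic map with singular fibres the fundamental group of the generic fibre need not inject, and identifying its image with $\ker f_*$ is exactly the hard point. This is the content of Theorem C of Dimca--Papadima--Suciu and of the paper's Theorem \ref{thm2}, and the latter is needed precisely because $f$ itself does \emph{not} have isolated singularities. The way the paper makes this work is the opposite of your non-degeneracy condition in Stage three: the map to the elliptic curve is arranged to \emph{factor through the base curves} of the Kodaira fibrations, $f=h'\circ g$ with $g\colon X\to Z=S_1\times\cdots\times S_r$ the product of the Kodaira projections and $h'\colon Z\to E'$ a sum of branched covers $f_i'\colon S_i\to E'$ with isolated singularities. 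The cited machinery of \cite{Llo-16-II} applies to maps from products of \emph{curves} (branched covers of elliptic curves) to an elliptic curve, not to kernels of characters on products of arbitrary K\"ahler manifolds, so demanding that $p_i$ restrict non-trivially to the Kodaira fibre would take you outside its hypotheses. Instead one computes the finiteness properties of $\pi_1 H$ (the fibre of $h'$) by Theorem \ref{thmLlI1} -- after passing to a cover $E'\to E$ so that the $f_i'$ become $\pi_1$-surjective, a point your sketch skips -- and transfers them to $\pi_1\overline{H}$ via the extension $1\to\pi_1 F\to\pi_1\overline{H}\to\pi_1 H\to 1$ (Proposition \ref{prop1part2}) and Bieri's Lemma \ref{propbieri}, $F$ being the product of the Kodaira fibres, a finite aspherical complex.

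Stage four also misses the decisive mechanism. A ``rigidity of surface-by-surface extensions mapping to limit groups'' cannot hold unconditionally: when the holonomy is finite the extension is virtually a direct product of surface groups and is virtually residually free (this is exactly the situation of Theorem \ref{thmSgn0Intro}). What makes the non-embedding statement true here is that the surfaces $X_{N,m}$ are built (following Kodaira) to have non-zero signature, so by Kotschick's theorem the monodromy of each $X_{N_i,m}\to S_i$ has infinite image in the mapping class group; the $\pi_1$-surjectivity of the $f_i'$ then forces the holonomy of $1\to\pi_1 F\to\pi_1\overline{H}\to\pi_1 H\to 1$ to be infinite. From infinite holonomy one extracts a subgroup $\Sigma\rtimes_{\alpha}\ZZ$ with $\alpha$ of infinite order in ${\rm{Out}}(\Sigma)$, and such a $3$-manifold group (and each of its finite-index subgroups) is not residually free; since subgroups of residually free groups are residually free, no finite-index subgroup of $\pi_1\overline{H}$ embeds in a product of surface groups. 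Without making the signature/Kotschick input explicit and propagating it to the holonomy of $\overline{H}\to H$, your final stage has no lever to distinguish these groups from the signature-zero examples, and the argument as sketched would fail.
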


We also obtain K\"ahler groups with exotic finiteness properties
from Kodaira fibrations of signature zero. Here the constructions
are substantially easier and do not take us far from subdirect products of surface groups.
Indeed it is not difficult to see that all of the groups that arise in this setting have a subgroup of finite index
that embeds in a direct product
of surface groups; it is more subtle to determine when the groups themselves admit such an embedding --- 
this is almost 
equivalent to deciding which Kodaira fibrations have a fundamental group
that is {\em residually free}, a problem solved in Section 6. The key
criterion is that for a Kodaira fibration $S_{\gamma}\hookrightarrow X {\rightarrow} S_{g}$, the preimage in ${\rm{Aut}}(\pi_1 S_{\gamma})$ of the holonomy representation $\pi_1S_{g}\to{\rm{Out}}(\pi_1 S_{\gamma})$ should be torsion-free (see Theorem \ref{t:resfree}). Here $S_{g}$ denotes a closed orientable surface of genus $g$.

\begin{theorem}
 Fix $r\geq 3$ and for $i=1,\dots, r$ let $S _{\g_i}\hookrightarrow X_i \stackrel{k_i}{\rightarrow} S_{g_i}$ be a topological surface-by-surface bundle such that $X_i$ admits a complex structure and has signature zero. Assume that $\g_i,g_i\geq 2$. Let $X=X_1\times \dots \times X_r$. Let $E$ be an elliptic curve and let $\alpha_i: S_{g_i}\rightarrow E$ be branched coverings such that the map $\sum_{i=1}^r \alpha_i:S_{g_1}\times \cdots \times S_{g_r}\rightarrow E$ is surjective on $\pi_1$. 
 
 Then we can equip $X_i$ and $S_{g_i}$ with K\"ahler structures such that:
\begin{enumerate}
\item the maps $k_i$ and $\alpha_i$ are holomorphic;
\item the map $f:=\sum_{i=1}^r \alpha_i\circ k_i:X\rightarrow E$ has connected smooth generic fibre $\overline{H}\stackrel{j}{\hookrightarrow} X$;
\item the sequence $$1\rightarrow \pi_1 \overline{H} \stackrel{j_{\ast}}{\rightarrow} \pi_1X\stackrel{f_{\ast}}{\rightarrow} \pi_1 E\rightarrow 1$$ is exact;
\item the group $\pi_1 \overline{H}$ is K\"ahler and of type $\mathcal{F}_{r-1}$, but not $\mathcal{F}_r$;
\item $\pi_1 \overline{H}$ has a subgroup of finite index that embeds in a direct product of surface groups.
\end{enumerate}
 \label{thmSgn0Intro}
\end{theorem}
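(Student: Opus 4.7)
The plan is to use the signature-zero hypothesis to realise the topological data holomorphically, and then reduce the finiteness computation to the case of products of closed curves studied in \cite{Llo-16-II}.

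For part (1), the signature-zero hypothesis on a K\"ahler surface that is topologically a surface bundle over a surface is known (by work in the spirit of Kotschick and Jost--Yau) to impose strong rigidity: up to isotopy of the complex structure, the topological projection $k_i\colon X_i\to S_{g_i}$ can be taken to be a holomorphic submersion, and moreover the resulting holomorphic fibration has finite monodromy in the mapping class group. This fixes a complex structure on $S_{g_i}$. Since $E$ is an elliptic curve, the Eells--Wood theorem lets me isotope each $\alpha_i$ to a holomorphic branched covering (reversing the complex structure on $E$ if needed to avoid antiholomorphic representatives). Consequently $f=\sum_{i=1}^r\alpha_i\circ k_i\colon X\to E$ is holomorphic.

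For (2) and (3), Sard's theorem (in holomorphic form) shows that a generic fibre $\overline{H}=f^{-1}(e)$ is a smooth compact complex submanifold of $X$ of codimension one; as a submanifold of the K\"ahler manifold $X$ it inherits a K\"ahler structure. The hypothesis that $\sum(\alpha_i)_*$ is surjective on $\pi_1$, combined with the (automatic) surjectivity of $(k_i)_*$, gives that $f_*$ is surjective on $\pi_1$, so Stein factorisation rules out disconnected generic fibres: if they were disconnected, $f$ would factor through a non-trivial finite cover of $E$, contradicting surjectivity of $f_*$. Exactness of the sequence in (3) is then the long exact homotopy sequence of the locally trivial smooth fibration obtained by restricting $f$ over a small disc about $e$.

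For (4), I exploit the fact that $f_*$ factors through the product of base curves. Let
\[
 N=\prod_{i=1}^r\pi_1 S_{\gamma_i}\subseteq\pi_1 X,\qquad K=\ker\!\left(\textstyle\sum_{i=1}^r(\alpha_i)_*\colon\prod_{i=1}^r\pi_1 S_{g_i}\to\pi_1 E\right).
\]
Then $\pi_1\overline{H}$ fits into a short exact sequence $1\to N\to\pi_1\overline{H}\to K\to 1$. By \cite{Llo-16-II}, $K$ is K\"ahler and of type $\mathcal{F}_{r-1}$ but not $\mathcal{F}_r$; since $N$ is a direct product of surface groups and hence of type $\mathcal{F}_\infty$, the standard comparison of finiteness properties across an extension with a type-$\mathcal{F}_\infty$ kernel transfers this precisely to $\pi_1\overline{H}$. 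For (5), the finite-monodromy statement from step (1) implies that each $X_i$ has a finite cover biholomorphic to a product of closed curves; hence $\pi_1 X$ contains a finite-index subgroup that embeds in a direct product of surface groups, and its intersection with $\pi_1\overline{H}$ furnishes the desired embedding.

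The main obstacle, I expect, is step (1): extracting from the signature-zero and K\"ahler hypotheses both the holomorphicity of $k_i$ (after a controlled deformation of the complex structure on $X_i$) and the virtual product structure used in (5), while leaving enough flexibility on $S_{g_i}$ to realise $\alpha_i$ holomorphically. The remainder is a largely standard assembly: Sard and Stein factorisation for the geometric claims, and a spectral-sequence comparison for the group-theoretic finiteness.
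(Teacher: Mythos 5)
There is a genuine gap, and it sits at the very first step. You fix the holomorphic structure on $X_i$ coming from the signature-zero hypothesis, let it induce a complex structure on $S_{g_i}$, and then try to isotope $\alpha_i$ to a holomorphic branched covering via Eells--Wood. This fails: Eells--Wood gives holomorphicity of a harmonic map only when $\chi$ of the target contributes positively, and here $\chi(E)=0$, so it says nothing; worse, for a \emph{fixed} complex structure on $S_{g_i}$ a nonconstant holomorphic map to a fixed elliptic curve $E$ exists only when $E$ is an isogeny factor of the Jacobian of $S_{g_i}$, which is false for a generic structure. The paper's construction goes in the opposite order, and this order matters: one first endows $S_{g_i}$ with the complex structure pulled back from $E$ through the branched covering $\alpha_i$ (always possible), and only then invokes the Addendum to Kotschick's theorem (Addendum \ref{corkot}): when $\sigma(X_i)=0$, for \emph{any} complex structure on the base there is a K\"ahler structure on $X_i$ making $k_i$ holomorphic. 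Without that flexibility statement your step (1) does not go through.

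Your arguments for (2) and (3) also do not suffice. Connectedness of the generic fibre is not a consequence of Stein factorisation plus $\pi_1$-surjectivity: the Stein factor $C\to E$ can be a \emph{branched} cover of degree $>1$ that is still surjective on $\pi_1$ (e.g.\ a genus-two curve branched over $E$), so no contradiction arises; in the paper connectedness comes from Theorem \ref{thmLlI1} applied to $h=\sum_i\alpha_i$ on the product of base curves, together with the fact that $g:X\to Z$ has connected fibres. Likewise, exactness of $1\to\pi_1\overline{H}\to\pi_1X\to\pi_1E\to1$ cannot be read off from the homotopy sequence of the locally trivial fibration over a small disc about a regular value, which gives no information about $\pi_1E$ or about $\ker f_*$; $f$ has singular fibres, and its singular set is not even isolated (it is a union of fibres of $g$), so the Dimca--Papadima--Suciu theorem does not apply directly. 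This is exactly why the paper proves Theorem \ref{thm2}: apply DPS to $h:Z\to E$, which does have isolated singularities, and transfer exactness to $f=h\circ g$ by a diagram chase using the smooth fibration $g$; Proposition \ref{prop1part2} (with $Z$ aspherical) then yields the extension $1\to\prod_i\pi_1 S_{\gamma_i}\to\pi_1\overline{H}\to\pi_1H\to1$ on which your (correct) finiteness transfer for (4) rests, via Lemma \ref{propbieri}. Finally, in (5) your claim that finite monodromy gives a finite cover \emph{biholomorphic} to a product of curves is too strong (the fibrewise complex structure may vary, so one only gets a topological product); but this is repairable, since the group-level statement the paper uses -- $\pi_1\Sigma_{\gamma_i}\times\ker\rho_i$ has finite index in $\pi_1X_i$ because the fibre group has trivial centre -- is all that is needed.
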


Fibrations of the sort described in Theorem \ref{thmSgn0Intro} have been discussed in the context of Beauville surfaces and, more generally, quotients of products of curves; see Catanese \cite{Cat-00}, also e.g. \cite[Theorem 4.1]{BauCatGruPig-12}, \cite{DedPer-12}. There are some similarities between that work and ours,
in particular around the use of fibre products to construct fibrations with finite holonomy, but the purpose of our work is very different.

This paper is organised as follows. In Section 2 we generalise a theorem of Dimca, Papadima and Suciu about singular fibrations over elliptic curves to larger classes of fibrations, weakening the assumptions on the singularities. In Section 3 we study Kodaira fibrations of signature zero and prove Theorem 1.2. In Section 4 we construct the family of complex surfaces that will be used in Section 5 to construct the new K\"ahler groups described in Theorem 1.1. Finally, in Section 6 we explore the conditions under which the groups we have constructed can be embedded in direct products of surface groups (and residually-free groups).

\begin{acknowledgements*}
We thank Mahan Mj for helpful conversations related to the contents of this paper and the anonymous referee for their careful reading and helpful comments.
\end{acknowledgements*}

\section{Exact sequences associated to fibrations over complex curves}
\label{sec:KodGenThm}
Dimca, Papadima and Suciu proved the following theorem and used it to construct the first examples of K\"ahler groups with arbitrary finiteness properties.

\begin{theorem}[\cite{DimPapSuc-09-II}, Theorem C]
Let $X$ be a compact complex manifold and let $Y$ be a closed Riemann surface of genus at least one. Let $h: X\rightarrow Y$ be a surjective holomorphic map with isolated singularities and connected fibres. Let $\widehat{h}: \widehat{X}\rightarrow \widetilde{Y}$ be the pull-back of $h$ under the universal cover $p:\widetilde{Y} \rightarrow Y$ and let $H$ be the smooth generic fibre of $\widehat{h}$ (and therefore of $h$).

Then the following hold:
\begin{enumerate}
 \item $\pi_i(\widehat{X},H)=0$ for $ i \leq \mathrm{dim}H$
 \item If $\mathrm{dim} H \geq 2$, then  $1\rightarrow \pi_1 H\rightarrow \pi_1 X\overset{h_*}\rightarrow \pi_1 Y\rightarrow 1$ is exact.
\end{enumerate}
\label{thmC'}
\end{theorem}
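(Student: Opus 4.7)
Since $Y$ has genus at least one, its universal cover $\widetilde Y$ is contractible, being either $\C$ or the hyperbolic plane. The pullback $\widehat X = X\times_Y \widetilde Y$ defines a regular covering $\widehat X\to X$ with deck group $\pi_1 Y$, so one immediately gets the short exact sequence
\[
1\to\pi_1\widehat X\to\pi_1 X\to\pi_1 Y\to 1.
\]
My plan is to prove (1) first and then combine it with the long exact sequence of the pair $(\widehat X,H)$ to identify $\pi_1 H$ with $\pi_1\widehat X$. Explicitly, when $\dim H\geq 2$, statement (1) gives $\pi_1(\widehat X,H)=0=\pi_2(\widehat X,H)$, so the inclusion $H\hookrightarrow\widehat X$ induces an isomorphism $\pi_1 H\cong\pi_1\widehat X$; substituting this into the short exact sequence above yields (2).

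The core task is therefore (1), and the natural tool is holomorphic Milnor fibration theory. Near each isolated critical point $x\in\widehat X$ of $\widehat h$, I would choose a small closed ball $B\subset\widehat X$ centred at $x$ and a small disc $\Delta\subset\widetilde Y$ about $\widehat h(x)$ so that $\widehat h\colon B\cap\widehat h^{-1}(\Delta)\to\Delta$ realises the Milnor fibration at $x$. Milnor's theorem for holomorphic maps with an isolated singularity says that for generic $t\in\Delta$ the local fibre $F_x = B\cap\widehat h^{-1}(t)$ has the homotopy type of a bouquet of $(n-1)$-spheres, where $n=\dim_{\C}X$. Hence $F_x$ is $(n-2)$-connected, and since $B$ is contractible, the long exact sequence of $(B,F_x)$ yields $\pi_i(B,F_x)=0$ for every $i\leq n-1=\dim H$.

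To transport this local connectivity to the global statement $\pi_i(\widehat X,H)=0$ for $i\leq\dim H$, I would exploit that $\widetilde Y$ is contractible and that away from the discrete critical set of $\widehat h$ the map is a proper smooth submersion with connected fibres, hence a locally trivial fibre bundle. Retracting $\widetilde Y$ onto the basepoint through the critical values and lifting the retraction to $\widehat X$ should express $\widehat X$, up to homotopy, as $H$ with the local Milnor pairs $(B,F_x)$ attached over each singular fibre; a Mayer--Vietoris / van Kampen assembly then propagates the $(\dim H)$-connectivity of each $(B,F_x)$ to the relative pair $(\widehat X,H)$. I expect the main obstacle to lie precisely in this global assembly: $\widetilde Y$ is non-compact and in general carries infinitely many critical values, so the retraction and associated handle decomposition must be arranged with care --- for example via a stratified Morse-theoretic argument on $\widetilde Y$ with handles lifted to $\widehat X$ --- in order to ensure that no low-dimensional cells are introduced when gluing in the singular-fibre contributions.
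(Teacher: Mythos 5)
Note first that the paper does not prove this statement at all: it is quoted as Theorem C of Dimca--Papadima--Suciu \cite{DimPapSuc-09-II} and used as a black box, so your attempt can only be compared with the argument in that reference. Your overall architecture does coincide with theirs: deduce (2) from (1) by covering-space theory, and prove (1) by combining the local topology of Milnor fibres of isolated singularities with a global argument over the contractible base $\widetilde{Y}$. In the reduction you should record why $h_*\colon \pi_1 X\to \pi_1 Y$ is surjective (this is where connectedness of the fibres, together with surjectivity and properness of $h$, is used); without it $\widehat{X}=X\times_Y\widetilde{Y}$ need not be connected and the sequence $1\to\pi_1\widehat{X}\to\pi_1X\to\pi_1Y\to 1$ does not make sense. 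Granting that, your derivation of (2) from (1) via $\pi_1(\widehat{X},H)=\pi_2(\widehat{X},H)=0$ is correct.

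The genuine gap is the step you yourself flag, and it is not a technicality: it is the actual content of the theorem. As written, ``retract $\widetilde{Y}$ onto the basepoint through the critical values and lift the retraction to $\widehat{X}$'' is not available, because $\widehat{h}$ is not a fibration near the singular fibres, so deformation retractions of the base do not lift; moreover relative homotopy groups do not satisfy excision, so a Mayer--Vietoris/van Kampen assembly cannot by itself promote the local vanishing $\pi_i(B,F_x)=0$ to $\pi_i(\widehat{X},H)=0$. What is needed (and what the cited proof rests on, via nonproper Lefschetz/vanishing-cycle theory in the style of L\^e and Dimca) is the statement that for each critical value $c$ and a small disc $\Delta_c\subset\widetilde{Y}$ containing no other critical value, the preimage $\widehat{h}^{-1}(\Delta_c)$ has the homotopy type of a nearby generic fibre $H_t$ with cells of dimension at least $n=\dim_{\C}X$ attached, one family of $n$-cells coning off each local Milnor fibre; equivalently, the pair $(\widehat{h}^{-1}(\Delta_c),H_t)$ is $(n-1)$-connected. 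This comparison between the global fibre $H_t$ and the local fibres $F_x$ is precisely the excision-type input missing from your sketch, and it is not implied by the contractibility of the balls $B$. One must then exhaust the noncompact surface $\widetilde{Y}$ by an increasing sequence of discs, each containing only finitely many of the (discrete, but in general infinitely many) critical values of $\widehat{h}$, use that $\widehat{h}$ is a locally trivial fibration away from the critical values, and pass to the limit using the fact that homotopy groups commute with ascending unions, finally invoking cellular approximation to get $\pi_i(\widehat{X},H)=0$ for $i\le n-1=\dim H$. Until this assembly is carried out, your argument establishes only the local input and the formal reduction of (2) to (1).
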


We shall need the following generalisation, which follows from Theorem \ref{thmC'}(2)
by a purely topological argument.

\begin{theorem}
\label{thm2}
Let $Y$ be a closed Riemann surface of positive genus and let $X$ be a compact K\"ahler manifold. Let $f:X\rightarrow Y$ be a surjective holomorphic map with connected generic (smooth) fibre $\overline{H}$.

If $f$ factors as
 \[
  \xymatrix{X \ar[r]^{g} \ar[dr]_{f}  & Z \ar[d] ^{h}\\ 
	    & Y \\}
 \]
where $g$ is a locally trivial holomorphic fibration and $h$ is a surjective holomorphic map with connected fibres of complex dimension $n\ge 2$ and isolated singularities, then the following sequence is exact 

\[  1 \rightarrow \pi_1 \overline{H}\rightarrow \pi_1 X \overset{f_*}\rightarrow \pi_1 Y\rightarrow 1.
\]
\end{theorem}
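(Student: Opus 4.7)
The plan is to pull $f$ back to the universal cover of $Y$ and compare the resulting smooth bundle $\widehat{X}\to\widehat{Z}$ with what Theorem~\ref{thmC'} tells us about $h$. Let $p\colon\widetilde{Y}\to Y$ be the universal cover and set $\widehat{Z}:=Z\times_Y\widetilde{Y}$ and $\widehat{X}:=X\times_Y\widetilde{Y}$, with induced holomorphic maps $\widehat{g}\colon\widehat{X}\to\widehat{Z}$ and $\widehat{h}\colon\widehat{Z}\to\widetilde{Y}$. By construction, $\widehat{g}$ is still a smooth holomorphic fibre bundle with fibre $F$ (the generic fibre of $g$), while $\widehat{h}$ still has isolated singularities and connected fibres. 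If $H_Z\subset\widehat{Z}$ denotes a smooth generic fibre of $\widehat{h}$, then the generic smooth fibre $\overline{H}$ of $f$ is $\widehat{g}^{-1}(H_Z)$, which inherits a smooth fibre bundle structure $F\to\overline{H}\to H_Z$.

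Applying Theorem~\ref{thmC'} to $\widehat{h}$: part~(1) together with $n\geq 2$ yields that the inclusion $H_Z\hookrightarrow\widehat{Z}$ induces an isomorphism on $\pi_1$ and a surjection on $\pi_2$, while part~(2) applied to $h$ gives in particular that $h_*$ is surjective. I would then compare the two homotopy long exact sequences attached to the bundles $F\to\overline{H}\to H_Z$ and $F\to\widehat{X}\to\widehat{Z}$. The vertical map on $\pi_1 F$ is the identity, and by naturality of the connecting map combined with the $\pi_2$-surjection just obtained, the images of $\pi_2 H_Z$ and $\pi_2\widehat{Z}$ in $\pi_1 F$ coincide in a common normal subgroup $N$. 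This produces a commutative diagram of short exact sequences with $\pi_1 F/N$ on the left, $\pi_1\overline{H}$ and $\pi_1\widehat{X}$ in the middle, and the isomorphic groups $\pi_1 H_Z\cong\pi_1\widehat{Z}$ on the right, so the five lemma forces $\pi_1\overline{H}\to\pi_1\widehat{X}$ to be an isomorphism.

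To finish, the surjectivity of $g_*$ (since $g$ is a fibre bundle with connected fibre) and of $h_*$ combine to give $f_*=h_*\circ g_*$ surjective; hence $\widehat{X}$ is connected and the covering $\widehat{X}\to X$ realises the subgroup $\ker(f_*)\leq\pi_1 X$. Composing with the isomorphism $\pi_1\overline{H}\cong\pi_1\widehat{X}$ from the previous step identifies $\pi_1\overline{H}$ with $\ker(f_*)$ inside $\pi_1 X$, which is exactly the exact sequence we want. I expect the most delicate point to be the identification of the two boundary images in $\pi_1 F$: this rests on the $\pi_2$-surjectivity supplied by Theorem~\ref{thmC'}(1), and is precisely what forces the hypothesis $\dim_\C H_Z\geq 2$.
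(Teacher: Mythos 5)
Your argument is correct, and it rests on the same key inputs as the paper's proof -- Theorem~\ref{thmC'} applied to $h$, the bundle obtained by restricting $g$ over the generic fibre of $h$, and the crucial point that the $\pi_2$-surjectivity supplied by Theorem~\ref{thmC'}(1) (exactly where $n\ge 2$ enters) controls the connecting maps into $\pi_1 F$ -- but you finish it by a genuinely different mechanism. The paper stays downstairs: it combines the homotopy sequences of $F\to\overline{H}\to H$ and $F\to X\to Z$ with the sequence $1\to\pi_1H\to\pi_1Z\to\pi_1Y\to1$ in a single commutative diagram, passing to the cover $\widehat{Z}$ only to see that $\pi_2H\to\pi_2Z$ is onto, and then proves injectivity of $\pi_1\overline{H}\to\pi_1X$ and exactness at $\pi_1X$ by diagram chases (the second chase being left to the reader). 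You instead also pull $X$ back, forming $\widehat{X}=X\times_Y\widetilde{Y}$ over $\widehat{Z}=Z\times_Y\widetilde{Y}$; Theorem~\ref{thmC'}(1) makes $H_Z\hookrightarrow\widehat{Z}$ a $\pi_1$-isomorphism and a $\pi_2$-surjection, naturality shows the two connecting maps have the same image $N\trianglelefteq\pi_1F$, the five lemma applied to the two extensions of $\pi_1H_Z\cong\pi_1\widehat{Z}$ by $\pi_1F/N$ gives $\pi_1\overline{H}\cong\pi_1\widehat{X}$, and covering-space theory (with $f_*=h_*\circ g_*$ surjective) identifies $\pi_1\widehat{X}$ with $\ker f_*$. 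What your route buys is that the delicate exactness-at-$\pi_1X$ step becomes the transparent statement that $\widehat{X}\to X$ is the connected cover corresponding to $\ker f_*$, at the modest extra cost of checking that $\overline{H}$ lifts to $\widehat{X}$ with $\overline{H}=\widehat{g}^{-1}(H_Z)$ (fine, since $\overline{H}$ maps to a single point of $Y$). Both arguments, yours and the paper's, implicitly use connectivity of the fibre $F$ of $g$ when ending the fibration sequences with $\pi_0F=1$; this holds in all the paper's applications, so it is not a defect specific to your write-up.
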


\begin{proof} By applying Theorem \ref{thmC'} to the map $h:Z\rightarrow Y$ we get a short exact sequence
\begin{equation}\label{eq1}
1\rightarrow \pi_1 H\rightarrow \pi_1 Z \rightarrow \pi_1 Y \rightarrow 1.
\end{equation}

Let $p\in Y$ be a regular value such that $H=h^{-1}(p)$, let $j:H\hookrightarrow Z$ be the (holomorphic)
inclusion map, let $F\subset X$ be the (smooth) fibre of $g: X\rightarrow Z$, and identify
$\overline{H} =f^{-1}(p)=g^{-1}(H)$. The long exact sequence in homotopy for the fibration 
  \[
  \xymatrix{ F \ar@{^{(}->}[r]   & \overline{H} \ar[d] \\ 
	    & H \\}
 \]
begins
\begin{equation}\label{eq2}
\cdots \rightarrow \pi_2 H\rightarrow \pi_1F\rightarrow \pi_1 \overline{H} \rightarrow \pi _1 H \rightarrow 1 (=\pi_0F)\rightarrow \cdots.
\end{equation}
Let $\widehat{Z}\rightarrow Z$ be the regular covering with Galois group $\ker h_*$, let 
$\widehat{h}:\widehat{Z}\rightarrow \widetilde{Y}$ be a lift of $h$ and, as in Theorem \ref{thmC'},
identify $H$ with a connected component of its preimage in $\widehat{Z}$.   

In the light of Theorem \ref{thmC'}(1), the long exact sequence in homotopy for the pair $(\widehat{Z},H)$ 
implies that
$\pi_iH\cong \pi_i\widehat{Z}$ for $i\leq \mathrm{dim}H-1 = n-1$ and that the natural map 
$\pi_nH\to \pi_n\widehat{Z}$ is surjective. In particular, $\pi_2H
\to \pi_2\widehat{Z} \overset{\cong}\to \pi_2Z$ is surjective; this map is denoted by $\eta$
in the following diagram.

In this diagram, the first column comes from (\ref{eq2}), the second column is part of the long exact sequence
in homotopy for the fibration $g:X\to Z$, and the bottom row comes from (\ref{eq1}). The 
naturality of the long exact sequence in homotopy 
assures us that the diagram is commutative. We must prove that the middle row yields the short exact sequence
in the statement of the theorem.

\[
 \xymatrix{
 & \pi_2 H\ar[r]^{\eta}\ar[d] & \pi_2 Z \ar[d]  & &\\ & \pi_1 F   \ar[r]^{=} \ar[d] & \pi_1 F \ar[d]^{\lambda} & & \\
 & \pi_1 \overline{H} \ar[r]^{\iota}\ar[d]  &\pi_1 X \ar[r]^{f_*}\ar[d] &\pi_1 Y\ar[r]\ar[d] & 1 \ar[d] \\
 1\ar[r]& \pi_1H \ar[r]^{\delta} \ar[d] &\pi_1Z \ar[r]^{h_{\ast}}\ar[d] & \pi_1Y \ar[r] & 1\\
 & 1\ar[r] & 1 & &\\ }
\]

We know that $\delta$ is injective and $\eta$ is surjective, so a simple diagram
chase  (an easy case of the 5-Lemma) implies that the map $\iota$ is injective.

A further (more involved) diagram chase
proves exactness at $\pi_1 X$, i.e., that $\mathrm{Im}(\iota)=\ker(f_*)$.
\end{proof}

We will also need the following proposition. Note that the hypothesis on $\pi_2 Z\to \pi_1F$  is automatically
satisfied if $\pi_1F$ does not contain a non-trivial normal abelian subgroup. This is the case, for example,
if $F$ is a direct product of hyperbolic surfaces.

\begin{proposition}
\label{prop1part2}
Under the assumptions of Theorem \ref{thm2}, if the map $\pi_2 Z\to \pi_1F$ associated to the fibration $g:X\to Z$ is trivial, then (2.2) reduces to a short exact sequence
\[
1\rightarrow \pi_1 F\rightarrow \pi_1 \overline{H}\rightarrow \pi_1 H\rightarrow 1.
\]
If, in addition, the fibre $F$ is aspherical, then $\pi_ i \overline{H} \cong \pi_i H \cong \pi_i X$ for $2\leq i \leq n -1$.
\end{proposition}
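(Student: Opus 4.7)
My plan is to deduce both parts of the proposition from naturality of the homotopy long exact sequence, combined with what Theorem~\ref{thmC'}(1) already provides about how $\pi_2H$ compares with $\pi_2Z$. The sequence~(2.2) reads $\cdots\to\pi_2H\xrightarrow{\partial}\pi_1F\to\pi_1\overline{H}\to\pi_1H\to 1$, so the displayed short exact sequence follows as soon as one shows that $\partial=0$.

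To establish $\partial=0$, I would note that $\overline{H}=g^{-1}(H)$ is literally the pullback of the regular fibration $g:X\to Z$ along $j:H\hookrightarrow Z$, so $g|_{\overline{H}}:\overline{H}\to H$ is a fibration with the same fibre $F$ and the diagram
\[
\xymatrix{F\ar[r]\ar@{=}[d] & \overline{H}\ar[r]\ar[d] & H\ar[d]^{j}\\ F\ar[r] & X\ar[r] & Z}
\]
is a morphism of fibrations. Naturality of the homotopy long exact sequence then supplies a commutative square
\[
\xymatrix{\pi_2H\ar[r]^{\partial}\ar[d]_{j_*} & \pi_1F\ar@{=}[d]\\ \pi_2Z\ar[r]^{\partial'} & \pi_1F}
\]
in which $\partial'=0$ by hypothesis. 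Applying Theorem~\ref{thmC'}(1) to $h:Z\to Y$, whose generic fibre $H$ has complex dimension $n\geq 2$, gives $\pi_2(\widehat{Z},H)=0$, and hence a surjection $\pi_2H\twoheadrightarrow\pi_2\widehat{Z}\cong\pi_2Z$; thus $j_*$ is surjective and $\partial=\partial'\circ j_*=0$.

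For the second assertion, asphericity of $F$ collapses the long exact sequence of $F\hookrightarrow\overline{H}\to H$ to isomorphisms $\pi_i\overline{H}\cong\pi_iH$ for all $i\geq 2$ (the case $i=2$ relying on the triviality of $\partial$ just established). Theorem~\ref{thmC'}(1) next gives $\pi_iH\cong\pi_i\widehat{Z}\cong\pi_iZ$ for $2\leq i\leq n-1$, the last isomorphism because $\widehat{Z}\to Z$ is a covering. Finally, the long exact sequence of $F\hookrightarrow X\to Z$, combined with $\pi_iF=0$ for $i\geq 2$ and with $\partial'=0$ (needed only when $i=2$), yields $\pi_iX\cong\pi_iZ$ in the same range; concatenating these isomorphisms proves the claim. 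The only genuine obstacle is bookkeeping: verifying that the boundary map labelled $\partial$ in~(2.2) truly is the connecting homomorphism of $g|_{\overline{H}}$, and that the pair $(\widehat{Z},H)$ delivers the asserted identifications of $\pi_2$ --- beyond this, the proposition is a direct consequence of Theorem~\ref{thmC'}.
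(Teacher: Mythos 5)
Your argument is correct and takes essentially the same route as the paper: commutativity of the square relating the connecting homomorphisms of $F\hookrightarrow \overline{H}\to H$ and $F\hookrightarrow X\to Z$ kills the map $\pi_2 H\to\pi_1 F$, and then asphericity of $F$ together with Theorem \ref{thmC'}(1) yields $\pi_i\overline{H}\cong\pi_i H\cong\pi_i Z\cong\pi_i X$ for $2\leq i\leq n-1$. The only superfluous step is your appeal to the surjectivity of $\pi_2 H\to\pi_2 Z$: since the connecting map $\pi_2 Z\to\pi_1 F$ is trivial by hypothesis, commutativity alone already forces $\pi_2 H\to\pi_1 F$ to vanish.
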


\begin{proof} The commutativity of the top square in the above diagram implies that $\pi_2H\to\pi_1F$
is trivial, so (\ref{eq2}) reduces to the desired sequence.

If the fibre $F$ is aspherical then naturality of long exact sequences of fibrations and Theorem \ref{thmC'}(1) imply that we obtain commutative squares
\[
\xymatrix{ \pi_ i \overline{H} \ar[r] \ar[d]^{\cong} & \pi_i  X \ar[d]^{\cong} \\
\pi_ i H \ar[r]^{\cong} & \pi_i Z }
\]
for $2\leq i \leq n-1$. It follows that $\pi_i \overline{H} \cong \pi_i H\cong  \pi_i X$ for $2\leq i \leq n-1$.
\end{proof} 

\section{Theorem \ref{thmSgn0Intro} and Kodaira fibrations of signature zero}
\label{sec:ExKod}

In this section we will prove Theorem \ref{thmSgn0Intro}. In order to 
explain the construction of the K\"ahler metrics implicit in the
statement, we need to first recall a construction of the second author \cite{Llo-16-II} that
provides the seed from which the failure of type $\FF_n$ in Theorem \ref{thmSgn0Intro} derives.

\vspace{.5cm}

\noindent{\bf Notation.} \textit{We write $\Sigma_g$ to denote the closed orientable surface of genus $g$.}

\subsection{The origin of the lack of finiteness}
 
The first examples of K\"ahler groups with exotic finiteness properties were constructed by
Dimca, Papadima and Suciu  in \cite{DimPapSuc-09-II} by considering a particular map from a 
product of hyperbolic surfaces to an elliptic curve. The following construction 
of the second author  \cite{Llo-16-II} extends their result to a much wider class of
maps.

Let $E$ be an elliptic curve, i.e. a 1-dimensional complex torus, and 
for $i=1,\dots,r$ let $h_i: \Sigma_{g_i}\rightarrow E$ be a branched cover, where each $g_i\geq 2$.
Endow $\Sigma_{g_i}$ with the complex structure that makes $h_i$ holomorphic. 
Let $Z=\Sigma_{g_1}\times \cdots \times \Sigma_{g_r}$.
Using the additive structure on $E$, we define a surjective map with isolated singularities
\[
h = \sum_{i=1}^r h_i: Z\rightarrow E.
\]
In this setting, we have the following criterion describing the finiteness properties of the generic fibre of $h$: 

\begin{theorem}[\cite{Llo-16-II}, Theorem 1.1] 
For each $r\geq 3$, if $h_*:\pi_1Z\to \pi_1E$ is surjective,   
then the generic fibre $H$ of $h$ is connected and its fundamental group $\pi_1 H$ is a projective (hence K\"ahler) group that is of type $\mathcal{F}_{r-1}$ but not of type $\mathcal{F}_r$. Furthermore, the sequence 
\[
1\rightarrow \pi_1 H \rightarrow \pi_1 Z\overset{h_*}\rightarrow \pi_1 E\rightarrow 1
\]
is exact.
\label{thmLlI1}
\end{theorem}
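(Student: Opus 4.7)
The plan is to prove the theorem in two stages. First, I would verify the hypotheses of Theorem \ref{thmC'} for the map $h:Z\to E$, which immediately yields both the connectedness of the generic fibre and the desired short exact sequence. Second, I would exploit the asphericity of $Z$ together with the $(r-1)$-connectedness of the fibre inclusion to pin down the finiteness properties of $\pi_1 H$.

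For the first stage, three points need checking. Surjectivity of $h$ is immediate from surjectivity of $h_*$, since the closed image of a non-constant holomorphic map from a compact complex manifold must be all of $E$. Isolated singularities follow from a differential computation: after identifying $T_0 E\cong \mathbb{C}$ and using the additive structure, the differential at $(z_1,\ldots,z_r)$ decomposes as $dh=dh_1+\cdots+dh_r$ on the direct sum of tangent spaces, so a critical point of $h$ must be a product of critical points of the $h_i$; each branched cover $h_i$ has finitely many branch points, so $h$ has finitely many. Connectedness of the generic fibre follows from Stein factorisation $h=\pi\circ\tilde h$: the finite unramified cover $\pi$ has degree one because $h_*$ is surjective on $\pi_1$. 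With $\dim H=r-1\geq 2$, Theorem \ref{thmC'} then supplies the exact sequence and tells us that the inclusion of the generic fibre $H$ into the regular cover $\widehat{Z}\to Z$ associated to $\ker h_*$ is $(r-1)$-connected.

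For the finiteness properties, observe that since each $g_i\geq 2$, every $\Sigma_{g_i}$ is aspherical, so $Z$ is a finite classifying space for $\pi_1 Z$ and the cover $\widehat{Z}$ is a (non-compact) classifying space for $\pi_1 H$. The fibre $H$ is itself a smooth, compact projective variety, hence a finite CW-complex, and its $(r-1)$-connected inclusion into $\widehat{Z}$ permits the construction of a $K(\pi_1 H,1)$ with finite $(r-1)$-skeleton by attaching cells of dimension at least $r$ to kill higher homotopy; thus $\pi_1 H$ is of type $\mathcal{F}_{r-1}$. Projectivity of $H$ also directly gives that $\pi_1 H$ is a projective K\"ahler group.

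The hard part is to show that $\pi_1 H$ is not of type $\mathcal{F}_r$. I would approach this via the Hochschild-Serre spectral sequence for $1\to \pi_1 H\to \pi_1 Z\to \pi_1 E\to 1$, exploiting the K\"unneth decomposition $H_*(Z)\cong H_*(\Sigma_{g_1})\otimes\cdots\otimes H_*(\Sigma_{g_r})$ and the fact that each $(h_i)_*:H_1(\Sigma_{g_i})\to \pi_1 E$ has finite-index image. The aim is to locate a top-degree class in the $E_2$-page that must survive but would be infinitely generated over $\mathbb{Z}[\pi_1 E]$ if $\pi_1 H$ were of type $\mathcal{F}_r$. This mirrors Bieri's classical obstruction for kernels of maps from products of free groups to $\mathbb{Z}$, but the codomain being $\mathbb{Z}^2$ rather than $\mathbb{Z}$ and the need to handle branched covers of positive degree make the homological bookkeeping noticeably more delicate; this is where I expect the bulk of the work to lie.
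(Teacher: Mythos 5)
First, note that the paper you are working from does not prove this statement at all: it is quoted verbatim from \cite{Llo-16-II} (Theorem 1.1 there) and used as a black box, so the only meaningful comparison is with the proof in that reference. Your first two stages are essentially the standard reduction and are fine modulo one small gap: in the Stein factorisation $h=\pi\circ\tilde h$ you assert that $\pi$ is an \emph{unramified} finite cover and then conclude $\deg\pi=1$ from $\pi_1$-surjectivity. Unramifiedness is not automatic and needs the isolated-singularity hypothesis: if $\pi$ were ramified at a point of the intermediate curve, $dh$ would vanish along the whole corresponding fibre of $\tilde h$, contradicting isolated singularities (note that $\pi_1$-surjectivity alone does not exclude a ramified factor, since a branched cover of $E$ by a higher-genus curve can be $\pi_1$-surjective). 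The asphericity argument for type $\mathcal{F}_{r-1}$ (the $(r-1)$-connected inclusion $H\hookrightarrow\widehat{Z}$ into the aspherical cover, then attaching cells of dimension $\geq r$) is correct and is the argument used in \cite{DimPapSuc-09-II,Llo-16-II}.

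The genuine gap is the negative direction, which is the actual content of the theorem and which you explicitly leave as a plan (``this is where I expect the bulk of the work to lie''). A Hochschild--Serre/Bieri-style bookkeeping argument for the kernel of $\pi_1Z\to\ZZ^2$ is not known to yield non-$\mathcal{F}_r$ in this generality: the kernel is not a nice subdirect product intersecting the factors in finitely generated groups, and purely algebraic finiteness criteria for such kernels over $\ZZ^2$ run into exactly the difficulties surrounding finiteness properties of fibre products (cf.\ the literature following \cite{BriHowMilSho-09,BriHowMilSho-13}). The proof in \cite{Llo-16-II} (following \cite{DimPapSuc-09-II}) instead uses the complex geometry in an essential way: the $\ZZ^2$-cover $\widehat{Z}$, which is a $K(\pi_1H,1)$, is obtained up to homotopy from the compact fibre $H$ by attaching $r$-cells along vanishing cycles, one $\ZZ^2$-orbit of cells for each of the (finitely many, but at least one) isolated critical points of $h$. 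Then $H_r(\pi_1H;\QQ)=H_r(\widehat{Z};\QQ)$ surjects onto the kernel of a map from a non-zero finitely generated free $\QQ[\ZZ^2]$-module (infinite-dimensional over $\QQ$) to the finite-dimensional space $H_{r-1}(H;\QQ)$, hence is infinite-dimensional, which contradicts property $FP_r$ and a fortiori $\mathcal{F}_r$. Without this (or an equivalent) mechanism --- in particular without using that $h$ genuinely has critical points coming from the branching --- your proposal does not establish the failure of $\mathcal{F}_r$, so as it stands it proves only the easier half of the statement.
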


\subsection{Kodaira Fibrations} 

The following definition is equivalent to the more concise one that we gave in the introduction.

\begin{definition}
A \textit{Kodaira fibration} $X$ is a compact K\"ahler surface  (real dimension 4) that admits a 
holomorphic submersion $X\rightarrow \Sigma_g$.  The fibre of $X\rightarrow \Sigma_g$ will
be a closed surface, $\Sigma_\gamma$ say. Thus, topologically, $X$ is a $\Sigma_\gamma$-bundle over $\Sigma_g$.
We require $g,\gamma\ge 2$.
\end{definition}

The nature of the holonomy in a Kodaira fibration is intimately related to the \textit{signature $\sigma(X)$},
which is the signature of the bilinear form
\[
\cdot \cup \cdot : H^2(X,\RR)\times H^2(X,\RR)\rightarrow H^4(X,\RR)\cong \RR
\]
given by the cup product.

\subsection{Signature zero: groups commensurable to subgroups of direct products of surface groups}
\label{sec:sgn0}

We will make use of the following theorem of Kotschick \cite{Kot-99} and a detail from
his proof. Here, 
$\Mod(\Sigma_g)$ denotes the mapping class group of $\Sigma_g$.

\begin{theorem}
\label{thm3}
Let $X$ be a (topological) $\Sigma_\gamma$-bundle over $\Sigma_g$ where $g,\gamma\ge 2$. Then the following are equivalent:
\begin{enumerate}
\item $X$ can be equipped with a complex structure 
and $\sigma(X)=0$;
\item the monodromy representation $\rho: \pi_1\Sigma_g\rightarrow {\rm{Out}}(\pi_1\Sigma_\gamma)=\Mod(\Sigma_\gamma)$ has finite image.
\end{enumerate}
\end{theorem}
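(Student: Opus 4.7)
The plan is to prove the two implications separately, with the easier direction coming from Nielsen realization and the harder one from signature positivity for non-isotrivial Kodaira fibrations.

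For the implication (2) $\Rightarrow$ (1), assume $\rho$ has finite image $\Gamma \subseteq \Mod(\Sigma_\gamma)$, and let $p: \widehat{\Sigma}_g \to \Sigma_g$ be the finite regular cover corresponding to $\ker\rho$. Since $\gamma \geq 2$, the Earle--Eells theorem tells us that $\mathrm{Diff}^+(\Sigma_\gamma)$ has contractible identity component, so smooth $\Sigma_\gamma$-bundles over a fixed base are classified by their monodromy in $\Mod(\Sigma_\gamma)$. Consequently the pullback bundle $\widehat{X} \to \widehat{\Sigma}_g$ has trivial monodromy and is smoothly trivial: $\widehat{X} \cong \widehat{\Sigma}_g \times \Sigma_\gamma$. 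Kerckhoff's Nielsen realization theorem then provides a complex structure on $\Sigma_\gamma$ for which $\Gamma$ acts by biholomorphisms; the diagonal action of $\Gamma$ on $\widehat{\Sigma}_g \times \Sigma_\gamma$ (combining the deck action on the first factor with the Nielsen-realised action on the second) is free and holomorphic, and its quotient inherits a K\"ahler structure and is diffeomorphic to $X$ by the uniqueness of smooth $\Sigma_\gamma$-bundles with given monodromy. The signature vanishes because the intersection form on $H^2$ of a product of oriented surfaces is a sum of hyperbolic planes, and because $\sigma$ is multiplicative under finite covers, so $\sigma(X) = \sigma(\widehat{X})/|\Gamma| = 0$.

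For the harder direction (1) $\Rightarrow$ (2), the strategy is: (a) use the K\"ahler structure to promote the smooth bundle structure on $X$ to a genuinely holomorphic Kodaira fibration; (b) apply Kodaira's positivity theorem, which asserts that a Kodaira fibration has $\sigma(X) \geq 0$ with equality precisely when it is isotrivial; (c) observe that isotriviality forces the monodromy to be finite. Step (a) would proceed by pulling back a basis of holomorphic $1$-forms on $\Sigma_g$ via the surjection $\pi_1 X \twoheadrightarrow \pi_1 \Sigma_g$ given by the bundle map, yielding $g$ linearly independent holomorphic $1$-forms on $X$ with vanishing pairwise wedge products; the Castelnuovo--de Franchis theorem then produces a holomorphic map $X \to \Sigma_g$ in the homotopy class of the original projection, which one can check up to isomorphism using the rigidity of holomorphic maps to hyperbolic Riemann surfaces. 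For step (b) I would use the Noether identity $3\sigma(X) = c_1^2(X) - 2c_2(X)$ together with the Arakelov-type inequality $\deg f_*\omega_{X/B} \geq 0$, which is an equality exactly when the relative Hodge bundle is flat, i.e.\ when the family is isotrivial. Step (c) is immediate: an isotrivial family is trivialised by a finite \'etale base change, so the topological monodromy factors through a finite quotient of $\pi_1 \Sigma_g$.

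The main technical obstacle is step (b): translating the topological vanishing $\sigma(X) = 0$ into rigidity of the analytic moduli map $\Sigma_g \to \mathcal{M}_\gamma$. This requires the sharp Arakelov bound showing that any non-isotrivial family of genus-$\gamma$ curves strictly increases the degree of the Hodge bundle, together with the identification of this analytic positivity with the topological signature via Noether's formula. Step (a) is essentially classical (Castelnuovo--de Franchis plus standard arguments on the induced map), and step (c) is routine once (b) is established. The main content of the theorem therefore lies in the signature-positivity statement for non-isotrivial Kodaira fibrations, which is the heart of Kotschick's argument.
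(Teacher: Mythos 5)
First, a contextual remark: the paper does not prove this statement at all --- it is quoted verbatim from Kotschick \cite{Kot-99}, with Addendum \ref{corkot} extracted from his proof. So you should be measured against Kotschick's argument. Your direction (2) $\Rightarrow$ (1) is correct and complete in outline: Earle--Eells to reduce to the monodromy representation, Kerckhoff's realisation of the finite image by biholomorphisms, the free diagonal quotient of $\widehat{\Sigma}_g\times\Sigma_\gamma$, and multiplicativity of $\sigma$ under finite covers. (Indeed, since the complex structure on $\Sigma_g$ can be chosen arbitrarily in this construction, your argument also yields Addendum \ref{corkot}.) Your strategy for (1) $\Rightarrow$ (2) --- make the pencil holomorphic, then use strict positivity of $\deg f_*\omega_{X/B}$ for non-isotrivial families to force isotriviality and hence finite monodromy --- is also the right skeleton and is essentially how Kotschick proceeds.

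However, as written the hard direction has genuine gaps. (i) Hypothesis (1) gives only a complex structure, not a K\"ahler one; your step (a) silently assumes K\"ahlerness, which is exactly what must be proved (it is part of the content of \cite{Kot-99}, and is what Addendum \ref{corkot} rests on). One must rule out the $b_1$-odd case, e.g.\ using $\chi_{\rm top}(X)=4(g-1)(\gamma-1)>0$, $\sigma(X)=0$, asphericity and the Enriques--Kodaira classification. (ii) You cannot ``pull back a basis of holomorphic $1$-forms on $\Sigma_g$ via the surjection $\pi_1X\twoheadrightarrow\pi_1\Sigma_g$'': $\Sigma_g$ carries no preferred complex structure and forms do not pull back along group homomorphisms. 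What one pulls back (along the continuous classifying map) are cohomology classes, giving a $2g$-dimensional subspace of $H^1(X;\C)$ whose cup product has $1$-dimensional image; producing from this a $g$-dimensional isotropic space of \emph{holomorphic} $1$-forms requires Hodge theory (Catanese's isotropic subspace theorem, or Siu--Beauville), which again needs the K\"ahler property from (i). (iii) Most seriously, your step (b) assumes the resulting holomorphic map $X\to C$ is a submersion with the same base and fibre genus as the smooth bundle and is fibrewise homotopic to it. Castelnuovo--de Franchis gives none of this: a priori the pencil could have singular or multiple fibres, in which case $\sigma(X)=4\deg f_*\omega_{X/B}$ fails (fibrations with singular fibres can even have $\sigma<0$), and finiteness of the analytic monodromy would not obviously bear on $\rho$. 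Closing this gap needs a $\pi_1$-argument (the finitely generated kernel $\pi_1\Sigma_\gamma$ forces the induced surjection to factor compatibly, identifying base genus $g$ and fibre genus $\gamma$) together with the Euler-number count $\chi_{\rm top}(X)=\chi(\Sigma_g)\chi(\Sigma_\gamma)$, which by the Zeuthen--Segre inequality excludes singular fibres and identifies the holomorphic fibration, up to fibre homotopy, with the given smooth one. Finally, the positivity statement you attribute to Kodaira is really Arakelov/Fujita-type positivity (degree zero forces a flat Hodge bundle, vanishing Kodaira--Spencer map, hence isotriviality); that part of your outline is correct once (iii) is in place.
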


Note that condition (2) implies in particular that $X$ is isogenous to a product of complex curves.
The following is an immediate consequence of the proof of Theorem \ref{thm3} in \cite{Kot-99}.

\begin{addendum}
\label{corkot}
If either of the equivalent conditions in Theorem \ref{thm3} holds, then for any complex structure on 
the base space $\Sigma_g$
 there is a K\"ahler structure on $X$ with respect to which the projection $X\to\Sigma_g$ is holomorphic.
\end{addendum}

We are now in a position to construct the examples promised in Theorem \ref{thmSgn0Intro}.
Fix $r\ge 3$ and for $i=1,\cdots, r$ let $X_i$ be the underlying manifold
of a Kodaira fibration with base $\Sigma_{g_i}$ and fibre $\Sigma_{\gamma_i}$. Suppose that
$\sigma(X_i)=0$. Let $Z= \Sigma_{g_1}\times \cdots \times \Sigma_{g_r}$.

We fix an elliptic curve $E$ and
choose branched coverings $h_i:\Sigma_{g_i}\rightarrow E$ so that $h:=\sum_ih_i$ induces a
surjection $h_*:\pi_1Z\to \pi_1E$. We endow $\Sigma_{g_i}$ with the complex structure that
makes $h_i$ holomorphic and use Addendum \ref{corkot} to choose a complex structure on $X_i$ that makes $p_i:X_i\to\Sigma_{g_i}$ holomorphic. Let $X=X_1\times\dots X_r$ and let $p:X\to Z$ be the map that restricts to $p_i$
on $X_i$.

\begin{theorem} Let $p:X\to Z$ and $h:Z\to E$ be the maps defined above,
let $f=h\circ p: X\rightarrow E$ and let $\overline{H}$ be the generic smooth fibre of $f$. Then  $\pi_1\overline{H}$ is a K\"ahler group of type $\mathcal{F}_{r-1}$ that is not of type $\mathcal{F}_r$ and there is a short exact sequence
\[
1\rightarrow \pi_1 \overline{H}\rightarrow \pi_1 X\overset{f_*}\rightarrow \pi_1E=\ZZ^2\rightarrow 1.
\]
Moreover, $\pi_1\overline{H}$ has a subgroup of finite index that embeds in a direct product of surface groups.
\label{thm4}
\end{theorem}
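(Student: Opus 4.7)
The plan breaks into three stages corresponding to the three claims.

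First, to produce the short exact sequence, I would apply Theorem \ref{thm2} to the factorisation $f=h\circ p$. The map $p:X\to Z$ is a regular holomorphic fibration, being a product of the holomorphic Kodaira projections $X_i\to\Sigma_{g_i}$. The map $h:Z\to E$ is a surjective holomorphic map with connected fibres of complex dimension $r-1\ge 2$ and isolated singularities, as used in Theorem \ref{thmLlI1}. Theorem \ref{thm2} then delivers the desired exact sequence, and $\overline{H}$, being a smooth fibre of a holomorphic map from the compact K\"ahler manifold $X$, is itself a compact K\"ahler manifold, so $\pi_1\overline{H}$ is K\"ahler.

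Second, to pin down the finiteness properties, I would invoke Proposition \ref{prop1part2}. The fibre $F=\Sigma_{\gamma_1}\times\cdots\times\Sigma_{\gamma_r}$ of $p$ has fundamental group containing no non-trivial normal abelian subgroup, so $\pi_2 Z\to\pi_1 F$ is automatically trivial and the proposition produces a short exact sequence
\[
1\to\pi_1 F\to\pi_1\overline{H}\to\pi_1 H\to 1,
\]
where $H$ is the smooth fibre of $h$. Theorem \ref{thmLlI1} tells us $\pi_1 H$ is of type $\mathcal{F}_{r-1}$ but not $\mathcal{F}_r$; since $\pi_1 F$ is of type $\mathcal{F}_\infty$, the standard fact that $\mathcal{F}_n$ is both preserved and reflected by extensions with $\mathcal{F}_\infty$ kernel forces $\pi_1\overline{H}$ to share these finiteness properties.

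Third, for the commensurability assertion, Theorem \ref{thm3} tells us that each monodromy $\rho_i:\pi_1\Sigma_{g_i}\to\out(\pi_1\Sigma_{\gamma_i})$ has finite image. I would let $\Gamma_i=\ker\rho_i$, let $\widetilde{\Sigma}_{g_i'}\to\Sigma_{g_i}$ be the corresponding finite cover, and set $X_i'=X_i\times_{\Sigma_{g_i}}\widetilde{\Sigma}_{g_i'}$, a finite cover of $X_i$ with monodromy trivial in $\out(\pi_1\Sigma_{\gamma_i})$. The claim is that $\pi_1 X_i'\cong\pi_1\Sigma_{\gamma_i}\times\Gamma_i$: for an abstract extension $1\to N\to G\to Q\to 1$ with $Z(N)=1$ and $Q\to\out(N)$ trivial, the conjugation map $G\to\aut(N)$ lands in $\inn(N)\cong N$, giving a retraction $G\to N$ whose kernel $C_G(N)$ meets $N$ trivially, surjects onto $Q$, and centralises $N$, so the resulting split extension is in fact a \emph{direct} product. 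Taking the product of the $\pi_1 X_i'$ produces a finite-index subgroup of $\pi_1 X$ that is a direct product of $2r$ surface groups, and its intersection with $\pi_1\overline{H}$ is the desired finite-index subgroup of $\pi_1\overline{H}$ embedded in a direct product of surface groups.

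The main subtlety lies in the third stage: triviality of the outer monodromy does not by itself split the extension, but the absence of a centre in $N$ upgrades this from a split extension to a genuine direct product, and that is what makes the embedding into a product of surface groups possible.
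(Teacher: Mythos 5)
Your proposal is correct and follows essentially the same route as the paper: Theorem \ref{thm2} and Proposition \ref{prop1part2} give the two exact sequences, Theorem \ref{thmLlI1} combined with the Bieri-type lemma transfers the finiteness properties from $\pi_1 H$ to $\pi_1\overline{H}$, and finiteness of the monodromy (via $\sigma(X_i)=0$ and Theorem \ref{thm3}) together with triviality of the centre of $\pi_1\Sigma_{\gamma_i}$ yields a finite-index direct product of surface groups in $\pi_1X$ whose intersection with $\pi_1\overline{H}$ gives the final claim. The only difference is that you spell out the centreless-kernel/trivial-outer-action splitting argument that the paper delegates to a citation of Brown, and you verify Proposition \ref{prop1part2}'s hypothesis via the absence of normal abelian subgroups in $\pi_1F$ rather than via asphericity of $Z$; both are fine.
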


We shall need the following well known fact.

\begin{lemma}
\label{propbieri}
 Let $N$ be a group with a finite classifying space and assume that there is a short exact sequence
 \[
 1\rightarrow N\rightarrow G\rightarrow Q\rightarrow 1.
 \]
Then $G$ is of type $\FF_n$ if and only if $Q$ is of type $\FF_n$.
\end{lemma}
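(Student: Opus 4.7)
The plan is to realise the extension geometrically as a fibration of Eilenberg--MacLane spaces
\[
K(N,1) \longrightarrow K(G,1) \longrightarrow K(Q,1).
\]
Such a realisation exists for any chosen model $Y$ of $K(Q,1)$: the short exact sequence is classified by a map $Y \to B\,\mathrm{Aut}(K(N,1))$, and pulling back the universal $K(N,1)$-fibration yields a $K(G,1)$ total space fibring over $Y$. Since $N$ has a finite classifying space by hypothesis, we may take the fibre $F$ to be a finite CW-complex.

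For the forward implication, I would suppose $Q$ is of type $\FF_n$ and choose a model $Y$ with finite $n$-skeleton. A standard CW-structure on the total space of the above fibration has its $k$-cells parametrised by pairs consisting of an $i$-cell in $Y$ and a $(k-i)$-cell in $F$. Since $F$ is finite and $Y$ has finite $n$-skeleton, the total space is a $K(G,1)$ with finite $n$-skeleton, so $G$ is of type $\FF_n$.

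For the converse, I would assume $G$ is of type $\FF_n$. Since $N$ has a finite $K(N,1)$ it is of type $\mathrm{FP}_\infty$, so each $\ZZ Q$-module $H_q(N; \ZZ Q)$ is finitely generated. The Hochschild--Serre spectral sequence
\[
E^2_{p,q} = H_p\bigl(Q;\, H_q(N;\, \ZZ Q)\bigr) \Longrightarrow H_{p+q}(G;\, \ZZ G)
\]
then transfers the $\mathrm{FP}_n$ property from $G$ to $Q$ by a standard induction on $n$. Finite presentation of $Q$ follows from that of $G$ together with finite generation of $N$ (for $n \geq 2$); for $n=1$ both statements reduce to finite generation, which is immediate. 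Combining $\mathrm{FP}_n$ with finite presentation yields $\FF_n$ for $Q$.

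The main technical point is the geometric realisation of the extension as a CW-fibration with a finite fibre; once that is in place, the cell-count and spectral-sequence steps are routine applications of standard classifying-space and $\mathrm{FP}_n$ machinery.
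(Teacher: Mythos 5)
The paper does not actually prove this lemma --- it simply cites \cite[Proposition 2.7]{Bie-81} --- so your argument has to stand on its own. Your forward direction ($Q$ of type $\mathcal{F}_n$ implies $G$ of type $\mathcal{F}_n$) is essentially sound: rather than invoking the classification of fibrations by maps to $B\,\mathrm{Aut}(K(N,1))$, it is cleaner to take a map $K(G,1)\to K(Q,1)$ inducing the quotient homomorphism and convert it into a fibration, whose fibre is the cover of $K(G,1)$ with fundamental group $N$ and hence homotopy equivalent to the given finite complex; the skeleton-by-skeleton argument then produces a CW complex \emph{homotopy equivalent} to the total space whose $k$-cells correspond to pairs of cells (it is not literally a CW structure on the total space, but that is all one needs). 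With that caveat this half is fine.

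The converse is where there is a genuine gap. The spectral sequence you write is misstated: with coefficients in the $\ZZ G$-module $\ZZ Q$ the Lyndon--Hochschild--Serre spectral sequence abuts to $H_{p+q}(G;\ZZ Q)$, not $H_{p+q}(G;\ZZ G)$. Worse, even after correcting this it carries no information: since $N$ acts trivially on $\ZZ Q$, the coefficient module $H_q(N;\ZZ Q)\cong H_q(N;\ZZ)\otimes_{\ZZ}\ZZ Q$ is induced, so $E^2_{p,q}=0$ for $p>0$ and the spectral sequence collapses to Shapiro's lemma $H_*(G;\ZZ Q)\cong H_*(N;\ZZ)$. More fundamentally, type $\mathrm{FP}_n$ is \emph{not} characterised by finite generation of homology with group-ring coefficients, so ``finitely generated coefficient modules plus a standard induction'' is not an argument --- that inference is precisely the non-trivial content of Bieri's proposition. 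Two genuine repairs: (a) Bieri's argument: take a free $\ZZ G$-resolution of $\ZZ$ that is finitely generated in degrees $\leq n$ and apply $\ZZ\otimes_{\ZZ N}-$; this gives a complex of finitely generated free $\ZZ Q$-modules in degrees $\leq n$ whose homology is $H_*(N;\ZZ)$, finitely generated because $N$ has a finite classifying space, and a standard lemma on complexes of finitely generated projectives with finitely generated homology then shows $\ZZ$ is $\mathrm{FP}_n$ over $\ZZ Q$; or (b) use the Bieri--Brown criterion that $\mathrm{FP}_n$ is equivalent to group homology commuting with arbitrary direct products of copies of the group ring in degrees up to $n$, and run the spectral sequence with coefficients $\prod_I \ZZ Q$, using that $N$ is $\mathrm{FP}_\infty$ so $H_*(N;-)$ commutes with such products. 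Your final step (finite presentability of $Q$ from that of $G$ plus finite generation of $N$, and $\mathrm{FP}_n$ plus finite presentation giving $\mathcal{F}_n$) is correct, but the $\mathrm{FP}_n$ transfer itself needs one of these genuine arguments.
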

\begin{proof}
See \cite[Proposition 2.7]{Bie-81}.
\end{proof}

\begin{proof}[Proof of Theorem \ref{thm4}]
By construction, the map $f=p\circ h: X\rightarrow E$ satisfies the hypotheses of Theorem \ref{thm2}.
Moreover, since $Z$ is aspherical, $\pi_2 Z=0$ and Proposition \ref{prop1part2} applies. Thus, writing
$\overline{H}$ for the generic smooth fibre of $f$ and $H$ for the generic smooth fibre of $h$, we have short exact sequences 
\[
1\rightarrow \pi_1 \overline{H}\rightarrow \pi_1 X\rightarrow \pi_1 E =\ZZ^2\rightarrow 1
\]
and
\[
1\rightarrow \pi_1 \Sigma_{\gamma_1}\times \cdots \times \pi_1\Sigma_{\gamma_r}\rightarrow \pi_1 \overline{H}\rightarrow \pi_1H\rightarrow 1.
\]
The product of the closed surfaces $\Sigma_{\gamma_i}$ is a classifying space for the kernel in
the second sequence, so Lemma \ref{propbieri} implies that $\pi_1\overline{H}$ is of type $\FF_k$ if and only if $\pi_1H$ is of type $\FF_k$.  Theorem \ref{thmLlI1} tells us that $\pi_1 H$ is of type $\mathcal{F}_{r-1}$ and not of type $\mathcal{F}_r$.

To see that $\pi_1 \overline{H}$ is commensurable to a subgroup of a direct product of surface groups,
note that the assumption $\sigma(X_i)=0$ implies that the monodromy representation 
$\rho_i:\pi_1\Sigma_{g_i}\to {\rm{Out}}(\pi_1\Sigma_{\gamma_i})$ is finite, and hence $\pi_1X_i$ contains the product of
surface groups $\Gamma_i=\pi_1\Sigma_{\gamma_i}\times \ker \rho_i$ as a subgroup of finite index. (Here we
are using the fact that the centre of $\Sigma_{\gamma_i}$ is trivial -- cf. Corollary 8.IV.6.8 in \cite{Bro-82}).
The required subgroup of finite index in  $\pi_1 \overline{H}$ is its intersection
with $\Gamma_1\times\dots\times\Gamma_r$.
\end{proof}

In the light of Theorem \ref{thm4}, all that remains unproved in Theorem \ref{thmSgn0Intro} is the
assertion that in general $\pi_1 \overline{H}$ is not itself a subgroup of a product of surface groups.
We shall return to this point in the last section of the paper.

\section{New Kodaira Fibrations $X_{N,m}$}
\label{sec:NewKod}

In 1967 Kodaira \cite{Kod-67} constructed a family of complex surfaces $M_{N,m}$ that fibre over a
complex curve but have positive signature.
(See \cite{Ati-69} for a very similar construction by Atiyah, and \cite{BD} for a more recent variation.)
We shall produce a new
family of K\"ahler surfaces $X_{N,m}$ that are Kodaira fibrations. 
We do so by adapting Kodaira's construction in a manner 
designed to allow appeals to Theorems \ref{thm2} and \ref{thmLlI1}. This is the main innovation in our construction of
new families of K\"ahler groups. 
 
Our surface $X_{N,m}$ is diffeomorphic to Kodaira's
surface $M_{N-1,m}$ but it has a different complex structure.  
Because signature is a topological invariant,
we can appeal to Kodaira's calculation of the signature
\begin{equation}
 \sigma(X_{N,m})= 8m^{4N}\cdot N\cdot m\cdot (m^2-1)/3.
 \label{eqnSign}
\end{equation}
The crucial point for us is that
$\sigma(X_{N,m})$ is non-zero:  Theorem \ref{thm3} implies 
that the monodromy representation
associated to the Kodaira fibration $X_{N,m}\to \Sigma$ has infinite image, from which it follows that
the K\"ahler groups with exotic finiteness properties constructed in Theorem \ref{thmKodExNotComm} are
not commensurable to subgroups of direct products of surface groups,
as we shall see in Section 6.

\begin{remark}
{\em{A priori}}, Kodaira's construction depends on choices of generating set for the fundamental groups of certain punctured Riemann surfaces. It is not clear to us whether  different choices could lead to non-homeomorphic surfaces. When we say that our surfaces $X_{N,m}$ are diffeomorphic (or homeomorphic) to Kodaira's surfaces $M_{N-1,m}$ this is under the assumption that we have made the same choices of generators as him.
The comparison is valid for all possible choices, i.e. the class of all differentiable 4-manifolds one can obtain from our adaption of Kodaira's construction is isomorphic to the class of all differentiable 4-manifolds Kodaira obtains from his construction. 
\end{remark}

\subsection{The construction of $X_{N,m}$}

Kodaira's  construction of $M_{N,m}$ begins with a regular finite-sheeted covering of a higher
genus curve $S\to R$. He branches $R\times S$ along the union of two curves: one is the graph
of the covering map and the other is the graph of the covering map twisted by a certain involution. We shall
follow this template, but rather than beginning with a regular covering, we begin with a carefully crafted
branched covering of an elliptic curve; this is a crucial feature, as it allows us to apply Theorems \ref{thm2} and
\ref{thmLlI1}. Our covering is designed to admit an involution that allows us to follow the remainder of 
Kodaira's argument.

\begin{figure}[ht] 
 \includegraphics[width=13cm,height=19.5cm,keepaspectratio]{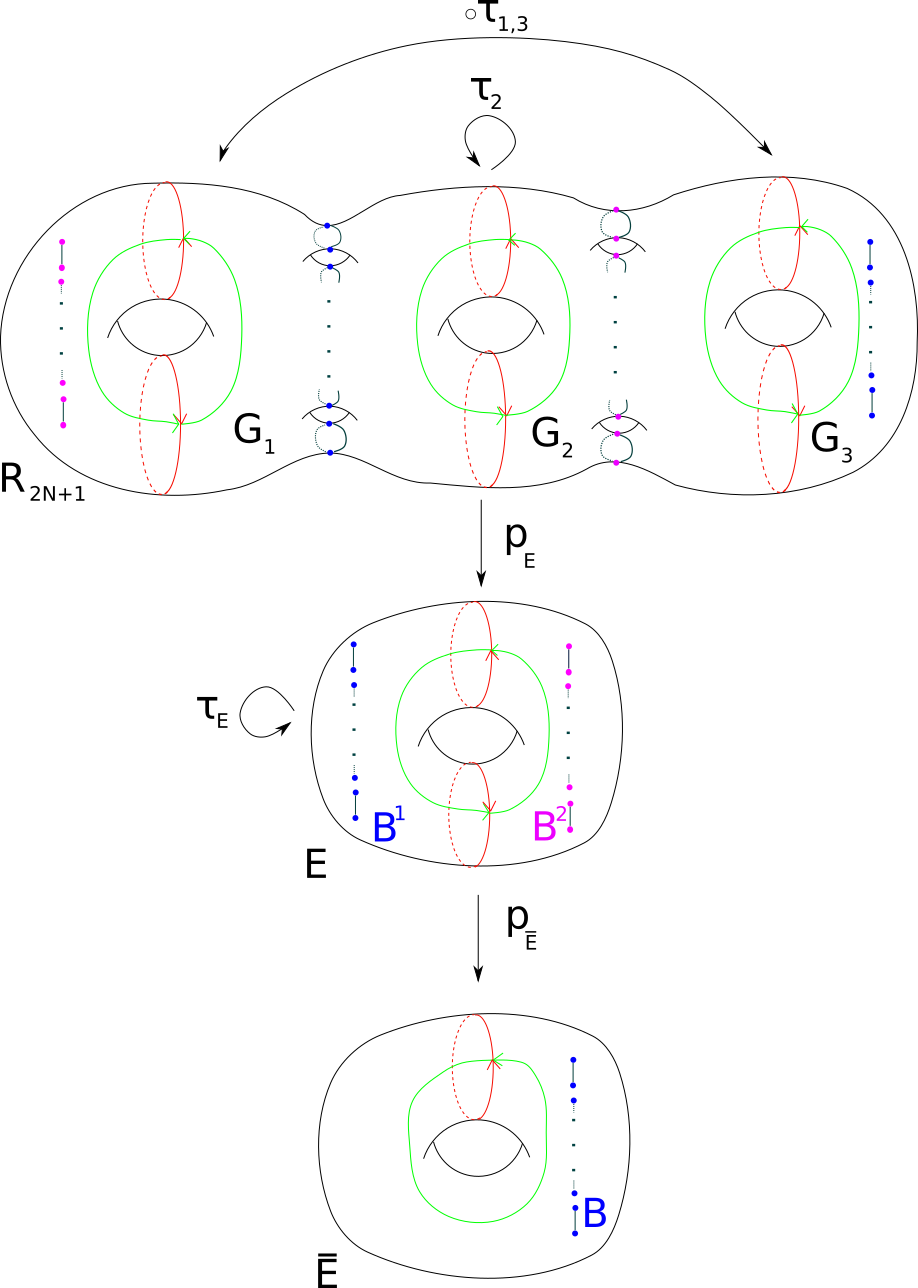}
 \caption{$R_{2N+1}$ as a branched covering of $E$, and the involution $\tau_E$}
 \vspace{-0.13cm}
 \label{fig:Kodaira1}
\end{figure}

Let $\overline{E}=\C / \Lambda$ be an elliptic curve. Choose a finite set of (branching) points $B=\left\{b_1,\cdots,b_{2N}\right\}\subset \overline{E}$ and fix a basis $\overline{\mu}_1,\overline{\mu}_2$ of $\Lambda\cong \pi_1\overline{E}\cong \ZZ ^2$ represented by simple loops $\mu_i^0$ in $\overline{E}\setminus B$. Let $p_{\overline{E}}:E\rightarrow \overline{E}$ be the double covering that the Galois correspondence associates to the homomorphism
$\Lambda \to \ZZ_2$ killing $\overline{\mu}_1$. Let $\mu_1$ be the preimage in $E$ of $\mu_1^0$ (it has two components)
 and let $\mu_2$ be the unique lift of $2\cdot\overline{\mu_2}$. Note that $\pi_1 E$ is generated by $\mu_2$ and either component of $\mu_1$. 
 
$E$ has a canonical complex structure making the covering map holomorphic; with this
structure, $E$ is an elliptic curve.

Let $\tau_E:E\rightarrow E$ be the generator of the Galois group of $E\to\overline{E}$; it is holomorphic and 
interchanges the components of $E\smallsetminus \mu_1$.

Denote by $B^{(1)}$ and $B^{(2)}$ 
the preimages of $B$ in the two distinct connected components of $E\smallsetminus \mu_1$.
The action of $\tau_E$ interchanges these sets.

Choose pairs of points in $\left\{b_{2k-1},b_{2k}\right\}\subset B$, $k=1,\cdots,N$, connect them by disjoint arcs $\gamma_1,\cdots,\gamma_{N}$ that do not intersect
$\mu_1^0$ and lift these arcs to $E$. Denote by $\gamma _1^1,\cdots, \gamma_N^1$ the arcs joining points in $B^{(1)}$ and by $\gamma_1^2,\cdots, \gamma_N^2$ the arcs joining points in $B^{(2)}$. 

Next we define a 3-fold branched covering of $E$ as follows. Take three copies $F_1$, $F_2$ and $F_3$ of $E\setminus (B^{(1)}\cup B^{(2)})$ identified with $E\setminus (B^{(1)}\cup B^{(2)})$ via maps $j_1$, $j_2$ and $j_3$. We obtain surfaces $G_1$, $G_2$ and $G_3$ with boundary by cutting $F_1$ along the arcs $\gamma^2_1,\cdots,\gamma^2_N$, cutting $F_2$ along the arcs $\gamma^i_k$, $i=1,2$, $k=1,\cdots, N$ and cutting $F_3$ along the arcs $\gamma ^1_1,\cdots, \gamma^1_N$. Identify the two copies of the open arc $\gamma_k^1$ in $F_2$ with the two copies of the open arc $\gamma_k^1$ in $F_3$ and identify the two copies of the open arc $\gamma_k^2$ in $F_2$ with the two copies of the open arc $\gamma_k^2$ in $F_1$ in the unique way that makes the map $p_E: G_1\cup G_2 \cup G_3 \mapsto E\setminus (B^{(1)}\cup B^{(2)})$ induced by the identifications $j_i$ of $F_i$ with $E\setminus (B^{(1)}\cup B^{(2)})$ a covering map; the map $p_E$ is portrayed in Figure \ref{fig:Kodaira1}.

Consider the closed surface $R_{2N+1}$ of genus $2N+1$ obtained by closing the cuts
and punctures of $G_1\cup G_2\cup G_3$. It is clear that the map $p_E$ extends to a 3-fold  covering map $R_{2N+1}\to E$. We continue to denote this extended map by $p_E$. 
There is a unique complex structure on $R_{2N+1}$ making $p_E:R_{2N+1}\to E$ holomorphic.

The map $\tau_E$ induces a continuous involution $\tau_2: G_2\rightarrow G_2$ and a continuous involution $\tau_{1,3}:G_1 \sqcup G_3\rightarrow G_1 \sqcup G_3$ without fixed points: these are defined by requiring the following
diagrams to commute 
\[
 \xymatrix{ G_2\ar[r]^{\tau_2} \ar[d]_{j_2} & G_2 \ar[d]_{j_2}\\
 E\setminus (B^{(1)}\cup B^{(2)}) \ar[r]^{\tau_E} & E\setminus (B^{(1)} \cup B^{(2)})\\}
\]
\[
 \xymatrix{ G_1\ar[r]^{\tau_{1,3}}\ar[d]_{j_1} & G_3\ar[d]_{j_3}\\
 E\setminus (B^{(1)}\cup B^{(2)}) \ar[r]^{\tau_E} & E\setminus (B^{(1)} \cup B^{(2)})\\}
\]
\[
 \xymatrix{ G_3\ar[r]^{\tau_{1,3}}\ar[d]_{j_3} & G_1\ar[d]_{j_1}\\
 E\setminus (B^{(1)}\cup B^{(2)}) \ar[r]^{\tau_E} & E\setminus (B^{(1)} \cup B^{(2)})\\}
\]
where $j_i$ denotes the unique continuous extension of the original identification $j_i:F_i\to E\smallsetminus
(B^{(1)}\cup B^{(2)})$.

The maps $\tau_2$ and $\tau_{1,3}$ coincide on the intersection of $G_1\sqcup G_3$ with $G_2$ and together they define a continuous involution 
$\tau_R':R_{2N+1}\setminus p_E^{-1}(B^{(1)}\cup B^{(2)})\rightarrow R_{2N+1}\setminus p_E^{-1}(B^{(1)}\cup B^{(2)})$ which extends to a continuous involution 
$\tau_R: R_{2N+1}\rightarrow R_{2N+1}$.

Consider the commutative diagram
\[
 \xymatrix{R_{2N+1}\setminus p_E^{-1}(B^{(1)}\cup B^{(2)})\ar[r]^{\tau_R'} \ar[d]_{p_E} & R_{2N+1}\setminus p_E^{-1}(B^{(1)}\cup B^{(2)})\ar[d]_{p_E}\\
 E\setminus (B^{(1)}\cup B^{(2)}) \ar[r]^{\tau_E} & E\setminus (B^{(1)}\cup B^{(2)})\\}
\]
As $p_E$ is an unramified holomorphic  covering onto $E\setminus (B^{(1)}\cup B^{(2)})$ and $\tau_E$ is a holomorphic deck transformation  mapping $E\setminus (B^{(1)}\cup B^{(2)})$ onto itself, we can locally express $\tau _R'$ as the composition of holomorphic maps $p_E^{-1}\circ \tau _E \circ p_E$ and therefore $\tau _R'$ is itself holomorphic.

Since $\tau_R$ extends continuously to $R_{2N+1}$, it is holomorphic on $R_{2N+1}$, by Riemann's Theorem on removable singularities. By definition $\tau_R\circ \tau_R =\mathrm{Id}$. Thus $\tau_R: R_{2N+1}\rightarrow R_{2N+1}$ defines a holomorphic involution of $R_{2N+1}$ without fixed points. 

We have now manoeuvred ourselves into a situation whereby we can mimic Kodaira's construction.
We replace the surface $R$ in Kodaira's construction \cite[p.207-208]{Kod-67} by $R_{2N+1}$ and the involution $\tau$ in Kodaira's construction by the involution $\tau _R$. The adaptation is straightforward, but we shall recall the argument
below for the reader's convenience.

{\em{The result of this construction will be a new complex surface that we denote $X_{N,m}$.}} Arguing as in the proof
of \cite[Proposition 1]{Kot-99}, we see that $X_{N,m}$ is K\"ahler.

\subsection{Completing the Kodaira construction}

Let $\alpha_1,\beta_1,\cdots, \alpha_{2N+1},\beta_{2N+1}$ denote a standard set of generators of $\pi_1 R_{2N+1}$ satisfying the relation $\left[\alpha_1,\beta_1\right]\cdots\left[\alpha_{2N+1},\beta_{2N+1}\right]=1$,
chosen so that  $\alpha_1, \alpha_2$ and $\alpha_3$ correspond to the preimages of 
$\mu_1$ in $G_1$, $G_2$ and $G_3$, and $\beta_1, \beta_2, \beta_3$ correspond to the preimages of $\mu_2$  (with tails connecting these loops to a common base point).

For $m\in \ZZ$ consider the $m^{2(2N+1)}$-fold covering $q_R:S\rightarrow R_{2N+1}$ corresponding to the homomorphism 
\begin{equation}
\begin{split}
 \pi_1 R_{2N+1} &\rightarrow (\ZZ/m\ZZ)^{2(2N+1)}\\
 \alpha_i &\mapsto (0,\cdots,0,1_{2i-1},0,0,\cdots,0)\\
 \beta_i & \mapsto (0,\cdots,0,0,1_{2i},0,\cdots,0),
 \end{split}
 \label{eqnCovhom}
\end{equation}
where $1_{i}$ is the generator in the $i$-th factor. By multiplicativity of the Euler characteristic, we see that the genus of $S$ is $2N\cdot m^{2(2N+1)}+ 1$.

To simplify notation we will from now on omit the index $R$ in $q_R$ and $\tau_R$, as well as the index $2N+1$ in $R_{2N+1}$, and we denote the image $\tau(r)$ of a point $r\in R$ by $r^*$. Let $q^*=\tau\circ q:S\rightarrow R$, let $W=R\times S$ and let
\[
 \Gamma = \left\{(q(u),u)\mid u\in S\right\},
\]
\[
 \Gamma ^* = \left\{ (q^*(u),u)\mid u\in S\right\}
\]
be the graphs of the holomorphic maps $q$ and $q^*$. Let $W''=W\setminus(\Gamma\cup \Gamma^*)$. {\em{We shall define the complex surface $X_{N,m}$}}
 as an $m$-fold branched covering of $W$ branched along $\Gamma$ and $\Gamma^*$. Its construction makes use of the following Lemma from \cite[p.209]{Kod-67}:
\begin{lemma}
 Fix a point $u_0\in S$, identify $R$ with $R\times u_0$ and let $D$ be a small disk around $t_0=q(u_0)\in R$. Denote by $\gamma$ the positively oriented boundary circle of $D$. Then $\gamma$ generates a cyclic subgroup $\langle \gamma\rangle$ of order $m$ in $H_1(W'', \ZZ)$ and
 \begin{equation}
 \label{eqn:KodairaHomology}
  H_1(W'',\ZZ)\cong H_1(R,\ZZ)\oplus H_1(S,\ZZ)\oplus \langle \gamma \rangle.
 \end{equation}
 \label{lemKodSplitting}
\end{lemma}

The proof of this lemma is purely topological and in particular makes no use of the complex structure on $W''$. From a topological point of view our manifolds and maps are equivalent to Kodaira's manifolds and maps, i.e.
there is a  homeomorphism between the $W''$ in our work and the $W''$ in Kodaira's work that makes all of the obvious diagrams commute. However, there is a subtle point that Kodaira does not seem to address which is that the isomorphism in \eqref{eqn:KodairaHomology} depends on a choice of splitting of the epimorphism $H_1(W'',\ZZ)\to H_1(R,\ZZ)\oplus H_1(S,\ZZ)$.

We impute that the following splitting is implicit in Kodaira's work: choose\footnote{The
topology of the covering space $X''$ that we will construct, and hence that of $X_{N,m}$, might depend on this choice, but all choices lead to Kodaira fibrations to which the
remainder of Kodaira's argument applies.}
 simple closed loops $\zeta_1,\dots, \zeta_{2(2N+1)}: [0,1]\to R\setminus \left\{t_0 \cup t_0^{\ast}\right\}$ representing a symplectic generating set for $H_1(R,\ZZ)$ and simply closed representatives  $\xi _1 ,\dots, \xi_{2 \cdot g(S)}:\left[0,1\right]\to S \setminus q_R^{-1}(\left\{t_0\cup t_0^{\ast}\right\})$ of a symplectic generating set for $H_1(S,\ZZ)$ in such a way that all conditions in the proof of Lemma \ref{lemKodSplitting} on these generators are satisfied. It is clear from Kodaira's proof of Lemma \ref{lemKodSplitting} that such a choice of generators exists and it 
follows easily from his proof  that the composition of these representatives with the canonical inclusions $R\setminus \left\{t_0 \cup t_0^{\ast}\right\} \hookrightarrow W''$ and $S \setminus q_R^{-1}(\left\{t_0\cup t_0^{\ast}\right\})$ defines a splitting $H_1(R,\ZZ) \oplus H_1(S,\ZZ) \hookrightarrow H_1(W'',\ZZ)$. The cokernel of this map gives us
the desired epimorphism  $H_1(W'',\ZZ) \to \langle \gamma \rangle$. The composition of this epimorphism with the abelianization map $\pi_1W''\rightarrow H_1(W'',\ZZ)$ induces an epimorphism $\kappa: \pi_1W''\rightarrow \langle\gamma\rangle$. Consider the $m$-sheeted covering $X''\rightarrow W''$ corresponding to the kernel of this map and equip $X''$ with the complex structure that makes the covering map holomorphic.

We claim that the covering $X''\to W''$ extends to an $m$-fold ramified covering $X_{N,m}\to W$ with branching locus $\Gamma\cup\Gamma ^*$, where $X_{N,m}$ is a closed
complex surface and 
the restriction to the preimage of $\Gamma \cup \Gamma^{\ast}$ is biholomorphic. For this,
it suffices to check two things in a neighbourhood of $\Gamma$ (with
entirely similar arguments applying to $\Gamma^{\ast}$):
\begin{enumerate}
 \item Let $N(\Gamma)$ be a tubular neighbourhood of $\Gamma$ in $W$,
 identified with a neighbourhood of the zero-section in the normal bundle. Then 
 $\partial N(\Gamma)$ is a circle bundle over $\Gamma$.
  Let $j: \partial N(\Gamma) \hookrightarrow W''$ be the inclusion. 
  Then $\kappa\circ j_*^{-1}$ maps the fundamental group of the circle fibre onto $\langle \gamma \rangle$ and the canonical epimorphism $\pi_1 \partial N(\Gamma)\to \pi_1 \Gamma$ maps $j_*^{-1}(\ker \kappa)$ onto $\pi_1 \Gamma$. Thus the preimage of $\partial N(\Gamma)\subset W''$ in $X''$ is the total space of 
a connected circle bundle over $\G$ and, topologically, $X''\to W''$ extends to a ramified covering $p:X_{N,m}\to R \times S$.
 \item With (1) established, we then need to check that the complex structure on $X''$ can be extended to a complex structure on $X_{N,m}$ with respect to which the topological
 covering is holomorphic.
\end{enumerate}

Our construction of $\kappa$ is crafted to make (1) obvious. 
For property (2) we need a complex structure on the charts around
points of $\Gamma\subset X_{N,m}$ making $p:X_{N,m}\to R\times S$ 
holomorphic near these points. The Riemann Extension Theorem then assures
the existence of the desired global complex structure on $X_{N,m}$. 

On each neighbourhood of a point on $\Gamma\subset R\times S$ 
we want to choose complex coordinates so that the covering is given locally
by $(z,w)\to (z^m,w)$, and $\Gamma$ is $\left\{z=0\right\}$. To this end,
if $(u,v)$ are the coordinates on a product neighbourhood $U\times V\subset R\times S$ of a point in $\Gamma$, we define new coordinates on $U\times V$ by $(z(u,v),w(u,v))= (u-q_R(v),v)$. Looking at the corestriction of $X''\to W''$ to the level sets $w\equiv w_0 =\mathrm{const}$, we see that in these coordinates the holomorphic map is given by an $m$-fold branched covering $(z,w_0)\to (z^n,w_0)$ of Riemann surfaces, as desired. 

The same considerations apply in a neighbourhood of $\Gamma^{\ast}$, so the construction
of the complex surface $X_{N,m}$ is complete.
\medskip

The composition of the covering map $X_{N,m}\rightarrow W$ and the projection $W=R\times S\rightarrow S$ induces a holomorphic submersion $\psi:X_{N,m}\rightarrow S$ with complex fibre $R'=\psi^{-1}(u)$ a closed Riemann surface that is an $m$-sheeted branched covering of $R$ with branching points $q(u)$ and $q^*(u)$ of order $m$. The complex structure of the fibres varies: each pair of fibres is homeomorphic
but not (in general) biholomorphic.

\section{Construction of K\"ahler groups}

We fix an integer $m\geq 2$ and associate to each $r$-tuple of 
positive integers $\nn=(N_1,\cdots, N_r)$ with $r\geq 3$ the product of the complex surfaces $X_{N_i,m}$ constructed in the previous section:
$$
X(\nn, m) = X_{N_1,m} \times\dots\times X_{N_r,m}.
$$
Each $X_{N_i,m}$ was constructed to have a holomorphic projection $\psi_i:X_{N_i,m}\to S_i$ with
fibre $R_i'$.  

By construction, each of the Riemann surfaces $S_i$ comes with
a holomorphic map $f_i=p_i\circ q_i$, where $p_i=p_{E,i}:R_{2N_i+1}\rightarrow E$ and $q_i=q_{R,i}:S_i\rightarrow R_{2N_i+1}$. We also need the homomorphism defined in (\ref{eqnCovhom}),
which we denote by $\theta_i$.

We want to determine what $f_{i*}(\pi_1S_i)\trianglelefteq \pi_1E$ is. By definition $q_{i*}(\pi_1S_i)=\mathrm{ker}(\theta_i)$, so $f_{i*}(\pi_1S_i)=p_{i*}(\mathrm{ker}\theta_i)$. The map $\theta _i$ factors through the abelianization $H_1(R_i,\ZZ)$ of $\pi_1 R_i$, yielding $\overline{\theta} _i: H_1(R_i,\ZZ)\rightarrow (\ZZ/m\ZZ)^{2(2N_i+1)}$,
which has the same image in $H_1(E,\ZZ) = \pi_1E$ as $f_{i*}(\pi_1S_i)$.

Now,
\[
\mathrm{ker}\overline{\theta}_i = \langle m\cdot\left[\alpha_1\right],m\cdot\left[\alpha_1\right],m\cdot\left[\beta_1\right],\cdots,m\cdot\left[\alpha_{2N_i+1}\right],m\cdot\left[\beta_{2N_i+1}\right]\rangle \leq H_1(R_i,\ZZ).
\]
and $\alpha_j$, $\beta_j$ were chosen such that for $1\leq i \leq r$ we have
\[
 p_{i*}\left[\alpha_j\right]=\left\{ \begin{array}{ll} \mu_1 &\mbox{, if } j\in\left\{1,2,3\right\}\\0 &\mbox{, else }\\ \end{array} \right.  
 \mbox{   and    } 
 p_{i*}\left[\beta_j\right]=\left\{\begin{array}{ll} \mu_2 &\mbox{, if } j\in\left\{1,2,3\right\}\\0 &\mbox{, else }\\\end{array}\right. .
\]
(Here we have abused notation to the extent of writing $\mu_1$ for the unique element of $\pi_1E=H_1E$ determined
by either component of the preimage of $\overline{\mu}_1$ in $E$.) Thus,  
\begin{equation}
\label{eqn:pi1E}
 f_{i*}(\pi_1 S_i) =\langle m\cdot \mu_1,m\cdot \mu_2\rangle\leq \pi_1 E.
\end{equation}
There are three {\em loops} that are lifts $\mu_{1,i}^{(1)},\mu_{1,i}^{(2)},\mu_{1,i}^{(3)}$ of $\mu_1$ with respect to $p_i$ (regardless of the choice of basepoint $\mu_{1,i}^{(j)}(0)\in p_i^{-1}(\mu_1(0))$). The same holds for $\mu_2$. And by choice of $\alpha_j,\beta_j$ for $j\in\left\{1,2,3\right\}$, we have $\left[\mu_1^{(j)}\right]=\left[\alpha_j\right]\in H_1(R_i,\ZZ)$ after a permutation of indices.

Denote by $q_E:E'\rightarrow E$ the $m^2$-sheeted covering of $E$ corresponding to the subgroups $f_{i*}(\pi_1S_i)$. Endow $E'$ with the unique complex structure making $q_E$ holomorphic. By \eqref{eqn:pi1E} the covering and the complex structure are independent of $i$.

Since $f_{i*}(\pi_1S_i)= q_{E*}(\pi_1E')$ there is an induced surjective map $f'_i:S_i\rightarrow E'$ making the diagram
\begin{equation}
\label{diag:Ex3sq1}
 \xymatrix{S_i\ar[d]_{f'_i}\ar[r]^{q_i} \ar[rd]^{f_i} & R_i\ar[d]^{p_i}\\ E'\ar[r]^{q_E} & E\\}
\end{equation}
commutative. The map $f'_i$ is surjective and holomorphic, since $f_i$ is surjective and holomorphic and $q_E$ is a holomorphic covering map.

\begin{lemma}
 Let $B'= q _{E} ^{-1} (B)$, $B_{S_i} = f_{i} ^{-1} (B) = f_{i} ^{'-1} (B')$. Let $\mu'_1,\mu'_2:[0,1]\to
 E'\setminus B'$ be loops that generate $\pi_1 E'$ 
 and are such that $q_E \circ \mu_1'=\mu_1^m$, $q_E \circ \mu_2'=\mu_2^m$.
 
 Then the restriction $f_i':S_i\setminus B_{S_i}\rightarrow E'\setminus B'$ is an unramified finite-sheeted covering map and all lifts of $\mu_1'$ and $\mu_2'$ with respect to $f_i'$ are loops in $S_i\setminus B_{S_i}$.
 \label{lem:Ex3Conn1}
\end{lemma}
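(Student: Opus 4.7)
The plan is to verify the two conclusions in turn: the unramified covering statement first, then the loop-lifting property.

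For the first, note that $q_i:S_i\to R_i$ is unramified by construction and $p_i:R_i\to E$ is ramified precisely over the finite set $B=B^{(1)}\cup B^{(2)}\subset E$, with preimage $B_{S_i}=f_i^{-1}(B)$ in $S_i$. Hence $f_i=p_i\circ q_i$ restricts to an unramified finite-sheeted covering $S_i\smallsetminus B_{S_i}\to E\smallsetminus B$. Since $q_E$ is unramified and $B'=q_E^{-1}(B)$, the commutativity of diagram (\ref{diag:Ex3sq1}) together with the local lifting property for covering maps forces $f_i'$ to also restrict to an unramified finite-sheeted covering $S_i\smallsetminus B_{S_i}\to E'\smallsetminus B'$; properness is automatic since $S_i$ is compact.

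For the loop-lifting I would reduce to analysing lifts of $\mu_j^m$ under $f_i$. Indeed, if $\widetilde{\mu}$ is a lift of $\mu_j'$ under $f_i'$, then $f_i\circ\widetilde{\mu}=q_E\circ f_i'\circ\widetilde{\mu}=q_E\circ\mu_j'=\mu_j^m$, so $\widetilde{\mu}$ is simultaneously a lift of $\mu_j^m$ under $f_i$; it therefore suffices to show that every lift of $\mu_j^m$ under $f_i$ is a loop in $S_i\smallsetminus B_{S_i}$. Factoring $f_i=p_i\circ q_i$, the three lifts of $\mu_j$ under $p_i$ are the loops $\mu_{j,i}^{(1)},\mu_{j,i}^{(2)},\mu_{j,i}^{(3)}$ constructed in Section 4, so every lift of $\mu_j^m$ under $p_i$ is a loop of the form $(\mu_{j,i}^{(k)})^m$.

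It remains to lift these loops through $q_i$. By construction, $[\mu_{1,i}^{(k)}]=[\alpha_k]$ and $[\mu_{2,i}^{(k)}]=[\beta_k]$ in $H_1(R_i,\ZZ)$ after a permutation of indices. The homomorphism $\theta_i$ of (\ref{eqnCovhom}) factors through $H_1(R_i,\ZZ)$ and sends $[\alpha_k]$ and $[\beta_k]$ to standard generators of $(\ZZ/m\ZZ)^{2g_i}$, so it annihilates $[(\mu_{j,i}^{(k)})^m]$. Thus $(\mu_{j,i}^{(k)})^m\in\ker\theta_i=q_{i*}\pi_1 S_i$, and since $q_i$ is the regular (abelian) cover defined by $\theta_i$, every lift of $(\mu_{j,i}^{(k)})^m$ under $q_i$ is a loop in $S_i$. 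These loops avoid $B_{S_i}$ because $\mu_j$ avoids $B$, and combining the two lifting steps yields the claim for $f_i'$. The only real obstacle is book-keeping across three coverings; the key algebraic input is the simple observation that $\theta_i\bigl([(\mu_{j,i}^{(k)})^m]\bigr)=0$.
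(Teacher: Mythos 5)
Your proposal is correct and takes essentially the same route as the paper: the unramified-covering claim follows from the factorisation $q_E\circ f_i'=f_i$, and the loop-lifting is reduced via $q_E\circ\mu_j'=\mu_j^m$ to the facts that all $p_i$-lifts of $\mu_j$ are the loops $\mu_{j,i}^{(k)}$ and that $(\mu_{j,i}^{(k)})^m$ lies in the normal subgroup $\ker\theta_i=q_{i*}\pi_1 S_i$, so all of its $q_i$-lifts are loops. (Your identification of $\ker\theta_i$ with $q_{i*}\pi_1 S_i$ is the correct one; the paper's proof contains a slip writing $f_{i*}(\pi_1 S_i)$ there.)
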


\begin{proof}
 Since $f_i$ and $q_E$ are unramified coverings over $E\setminus B$, it follows from the commutativity of diagram \eqref{diag:Ex3sq1} that the restriction $f_i':S_i\setminus B_{S_i}\rightarrow E'\setminus B'$ is an unramified finite-sheeted covering map.
 
 For the second part of the statement it suffices to consider $\mu_1'$, since the proof of the statement for $\mu_2'$ is completely analogous. Let $y_0=\mu_1'(0)$, let $x_0\in f'^{-1}(y_0)$ and let $\nu_1: \left[0,1\right] \rightarrow S_i\setminus B_{S_i}$ be the unique lift of $\mu_1'$ with respect to $f_i'$ with $\nu_1(0)=x_0$. 
 
 Since $q_i$ is a covering map it suffices to prove that $q_i \circ \nu_1$ is a loop in $R_i$ based at $z_0=q_i(x_0)$ such that its unique lift based at $x_0$ with respect to $q_i$ is a loop in $S_i$.
 
By the commutatitivity of diagram (\ref{diag:Ex3sq1}) and the definition of $\mu_1'$, 
 \[
  \mu_1^m=q_E\circ \mu_1'=q_E\circ f_i'\circ \nu_1 = p_i\circ q_i \circ \nu_1.
 \]
 But the unique lift of $\mu_1^m$ starting at $z_0$ is given by $(\mu_{j_0}^1)^m$ where $j_0\in \left\{1,2,3\right\}$ is uniquely determined by $\mu_{j_0}^{(1)}(0)=z_0$. Uniqueness of path-lifting gives
 \[
  q_i\circ \nu _1 = (\mu_{j_0}^{(1)})^m.
 \]
Thus $(\mu_{j_0}^{(1)})^m\in \ker\theta_i= f_{i*}(\pi_1S_i)$. Now, $\mathrm{ker}\theta_i$ is normal
in  $\pi_1 R_i$ and $q_i:S_i\rightarrow R_i$ is an unramified covering map, so all lifts of $(\mu_{j_0}^{(1)})^m$ to $S_i$ are loops. In particular $\nu_1$ is a loop in $S_i$. 
\end{proof}

\begin{definition} A branched covering $\alpha: S\rightarrow T^2$ of a 2-torus $T^2$ with finite branch locus $B\subset T^2$ is \textit{purely-branched} if there are simple closed loops $\eta_1,\eta_2$ in $T^2\setminus B$, intersecting only in $\eta_1(0)=\eta_2(0)$, that generate $\pi_1 T^2$ and are such that the normal closure of $\left\{\eta_1,\eta_2\right\}$ in $\pi_1 \left(T^2\setminus B\right)$ satisfies $\left\langle \!\left\langle \eta_1, \eta _2 \right\rangle\! \right\rangle \leq \alpha_* (\pi_1 (S \setminus \alpha^{-1}(B)))$. 
\end{definition}

Lemma \ref{lem:Ex3Conn1} and the comment after \cite[Definition 2.2]{Llo-16-II} imply

\begin{corollary}
The holomorphic maps $f_i': S_i \rightarrow E'$ are purely-branched covering maps for $1\leq i \leq r$. In particular, the maps $f'_i$ induce surjective maps on fundamental groups.
 \label{cor:Ex3Cond3}
\end{corollary}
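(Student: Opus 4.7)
The plan is to verify the defining property of \emph{purely-branched} directly from Lemma \ref{lem:Ex3Conn1}, and then deduce surjectivity on $\pi_1$ from the standard remark about purely-branched maps cited from \cite{Llo-16-II}.

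For the purely-branched condition, I would take $\eta_1 := \mu_1'$ and $\eta_2 := \mu_2'$ as the distinguished loops in $E' \setminus B'$. These generate $\pi_1 E'$ by hypothesis in Lemma \ref{lem:Ex3Conn1}, so the only thing to establish is the containment $\langle\!\langle \mu_1',\mu_2'\rangle\!\rangle \leq f'_{i*}\pi_1(S_i \setminus B_{S_i})$, where the normal closure is taken in $\pi_1(E' \setminus B')$. The bridge between Lemma \ref{lem:Ex3Conn1} and this containment is the following standard fact from covering space theory, applied to the unramified finite-sheeted cover $f_i'|:S_i \setminus B_{S_i} \to E' \setminus B'$ supplied by that lemma: a loop $\gamma$ in the base has the property that \emph{all} of its lifts are loops if and only if every conjugate of $\gamma$ in $\pi_1(E' \setminus B')$ lies in the image of $f'_{i*}$, i.e.\ precisely when $\langle\!\langle \gamma\rangle\!\rangle \leq f'_{i*}\pi_1(S_i \setminus B_{S_i})$. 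Since Lemma \ref{lem:Ex3Conn1} guarantees this for both $\gamma = \mu_1'$ and $\gamma = \mu_2'$, the combined normal closure sits inside the image, and Definition 5.2 is satisfied.

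For the final assertion, I would invoke the remark following \cite[Definition 2.2]{Llo-16-II}: via the inclusion $S_i \setminus B_{S_i} \hookrightarrow S_i$ (which is surjective on $\pi_1$, since puncturing a surface does not change surjectivity onto a generating set), the classes $[\mu_1'],[\mu_2'] \in \pi_1 E'$ lie in $f'_{i*}\pi_1 S_i$, and being generators they force $f'_{i*}:\pi_1 S_i \to \pi_1 E'$ to be surjective.

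There is no real obstacle here: the corollary is a direct translation of Lemma \ref{lem:Ex3Conn1} into the language of Definition 5.2, once one recognises the covering-space dictionary between ``all lifts of $\gamma$ are loops'' and ``$\langle\!\langle \gamma\rangle\!\rangle$ lies in the image of $\pi_1$.'' The only minor care needed is to keep track of basepoints/conjugation so that ``all lifts'' really does translate to the full normal closure rather than just the image of $\gamma$ itself.
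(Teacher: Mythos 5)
Your proposal is correct and follows the same route as the paper, which simply cites Lemma \ref{lem:Ex3Conn1} together with the remark after Definition 2.2 of \cite{Llo-16-II}; you have merely made explicit the covering-space dictionary (``all lifts of $\gamma$ are loops'' if and only if $\langle\!\langle\gamma\rangle\!\rangle$ lies in the image of $\pi_1$ of the punctured cover, since the images at the various basepoints in the fibre form the full conjugacy class) that the paper leaves implicit. The final deduction of surjectivity on $\pi_1$ via the commutative square with the inclusions $S_i\setminus B_{S_i}\hookrightarrow S_i$ and $E'\setminus B'\hookrightarrow E'$ is exactly the content of the cited remark, so nothing is missing.
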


\begin{remark}
The second author of this paper introduced invariants for the K\"ahler groups arising in Theorem \ref{thmLlI1} and showed that these invariants lead to a complete classification of these groups in the special case where all the coverings are purely-branched. Thus Corollary \ref{cor:Ex3Cond3} ought to help in classifying the groups that arise from our construction. We shall return to this point elsewhere.
\end{remark} 
 
 Let $$Z_{\nn,m}= S_1\times \cdots\times S_r.$$ Using the additive structure on the elliptic curve $E'$ we combine the maps
 $f_i':S_i\to E'$ to define $h':Z_{\nn,m}\to E'$ by
  \[
  h': (x_1,\cdots,x_r) \mapsto \sum_{i=1}^r f_i'(x_i). 
 \]
 
 \begin{lemma}
  For all $m\ge 2$, all $r\ge 3$ and
  all $\nn=(N_1,\dots,N_r)$,
  the map $h':Z_{\nn,m}\to E'$ has isolated singularities and connected fibres.
  \label{lem:Ex3IsolConn}
 \end{lemma}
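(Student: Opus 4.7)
The statement has two parts. My plan is to handle the isolated singularities by a direct calculation with the differential, and the connectedness of fibres by reducing to Theorem \ref{thmLlI1}. For the first part, write $h = \mu \circ (f_1' \times \cdots \times f_r')$, where $\mu : (E')^r \to E'$ is the group law on the elliptic curve. Using the translation-invariant trivialisation of $TE'$, the differential of $\mu$ at any point is the summation map, and therefore
\[
 dh|_{(x_1, \dots, x_r)}(v_1, \dots, v_r) \;=\; \sum_{i=1}^r df_i'|_{x_i}(v_i)
\]
in $T_{h(x)}E' \cong \C$. This vanishes if and only if every $df_i'|_{x_i}$ vanishes, i.e. if and only if every $x_i$ is a ramification point of the branched covering $f_i'$. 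The ramification locus $C_i \subset S_i$ of $f_i'$ is finite, so the critical locus of $h$ equals the finite set $C_1 \times \cdots \times C_r$, and all singularities of $h$ are isolated.

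For the connectedness of the fibres, I would apply Theorem \ref{thmLlI1} with $(f_1', \dots, f_r') : S_1 \times \cdots \times S_r \to E'$ playing the role of $(h_1, \dots, h_r)$. Each $f_i'$ is a branched cover of an elliptic curve by a Riemann surface of (large) genus at least two, so the setup matches. The only non-trivial hypothesis to verify is that $h_\ast : \pi_1 Z_{\nn,m} \to \pi_1 E'$ is surjective. For this, fix basepoints $x_j^0 \in S_j$ for $j \neq i$ and consider the holomorphic slice inclusion $\iota_i : S_i \hookrightarrow Z_{\nn,m}$, $x_i \mapsto (x_1^0, \dots, x_i, \dots, x_r^0)$. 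Then $h \circ \iota_i$ is the composition of $f_i'$ with translation by the constant $c = \sum_{j \neq i} f_j'(x_j^0)$; since translations on a connected abelian Lie group are homotopic to the identity, they induce the identity on $\pi_1 E'$, and hence $(h \circ \iota_i)_\ast = (f_i')_\ast$. By Corollary \ref{cor:Ex3Cond3} the latter is surjective, so $h_\ast$ is surjective as well. Theorem \ref{thmLlI1} then yields connectedness of the generic fibre of $h$, and Stein factorisation promotes this to connectedness of every fibre.

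The only point that requires any thought is the surjectivity of $h_\ast$ on $\pi_1$, and this is delivered cleanly by the slice argument once one knows Corollary \ref{cor:Ex3Cond3}. The remaining ingredients---the differential computation for isolated singularities, and the appeal to Theorem \ref{thmLlI1} for connectedness---are routine, so no genuine obstacle arises.
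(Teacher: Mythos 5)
Your proposal is correct and follows essentially the same route as the paper: the paper likewise observes that the critical points of $h$ must have every coordinate a branch point of the corresponding $f_i'$ (hence lie in the finite set $B_{S_1}\times\cdots\times B_{S_r}$), and deduces connectedness of the fibres from the surjectivity of the $(f_i')_\ast$ given by Corollary \ref{cor:Ex3Cond3} together with Theorem \ref{thmLlI1}. Your explicit differential computation and the slice argument for surjectivity of $h_\ast$ merely spell out steps the paper leaves implicit.
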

 \begin{proof}
  By construction, $f_i'$ is non-singular on $S_i\setminus B_{S_i}$ and $B_{S_i}$ is a finite set. 
  Therefore, the set of singular points of $h'$ is contained in the finite set
  \[
   B_{S_1}\times \cdots \times B_{S_r}.
  \]
  In particular, $h'$ has isolated singularities.
  
 Corollary \ref{cor:Ex3Cond3} implies that the $f'_i$ induce surjective maps on fundamental groups, so we can apply Theorem \ref{thmLlI1} to conclude that $h'$ has indeed connected fibres.
 \end{proof}

Finally, we define $g:X_{\nn,m}\to Z_{\nn,m}$ to be the product of the fibrations $\psi_i:X_{N_i,m}\to S_i$
and we define 
 \[
  f=h'\circ g: X_{\nn,m}\to E'.
 \]
Note that $g$ is a smooth fibration with fibre $F_{\nn,m}:=R_1'\times\dots\times R_r'$.  

With this notation established, we are now able to prove:
 
 \begin{theorem}
  Let $f: X_{\nn,m}\to E'$ be as above, let $\overline{H}_{\nn,m}\subset X_{\nn,m}$
   be the generic smooth fibre of $f$, and
  let $H_{\nn,m}$ be its image in $Z_{\nn,m}$.
  Then:
  \begin{enumerate}
  \item  $\pi_1 \overline{H}_{\nn,m}$ is a K\"ahler group that is of type $\mathcal{F}_{r-1}$ but not of type $\mathcal{F}_r$;
  \item  there are short exact sequences
  \[
   1\rightarrow \pi_1 F_{\nn,m}\rightarrow \pi_1\overline{H}_{\nn,m}\overset{g_*}\rightarrow \pi_1 H_{\nn,m}\rightarrow 1
  \]
  and
  \[
   1\rightarrow \pi_1 \overline{H}_{\nn,m}\rightarrow \pi_1 X_{\nn,m}\overset{f_*}\rightarrow \Z^2 \rightarrow 1,
  \]
  such that the monodromy representation $\pi_1H_{\nn,m}\to{\rm{Out}}(\pi_1F_{\nn,m})$ has infinite image;
  \item no subgroup of finite index in $\pi_1\overline{H}_{\nn,m}$ embeds in a direct product of surface groups (or of residually free groups).
  \end{enumerate}
  \label{thmKodExNotComm}
 \end{theorem}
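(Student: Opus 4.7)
The plan is to follow the template of the proof of Theorem \ref{thm4} for parts (1) and (2), while deferring the substantive content of (3) to Section~6. For parts (1) and (2), I use the factorisation $f = h' \circ g$: since $g$ is a regular holomorphic fibration by construction and Lemma \ref{lem:Ex3IsolConn} shows that $h'$ has isolated singularities and connected fibres of complex dimension $r\ge 3$, Theorem \ref{thm2} delivers the second exact sequence, with $\pi_1 E'\cong \Z^2$. The fibre $F_{\nn,m}=R_1'\times\dots\times R_r'$ is a product of closed hyperbolic surfaces, so $\pi_1 F_{\nn,m}$ has no non-trivial normal abelian subgroup and Proposition \ref{prop1part2} applies, giving the first exact sequence. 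For the finiteness properties, Corollary \ref{cor:Ex3Cond3} makes Theorem \ref{thmLlI1} applicable to $h'$, so $\pi_1 H_{\nn,m}$ is K\"ahler and of type $\mathcal{F}_{r-1}$ but not of type $\mathcal{F}_r$. Lemma \ref{propbieri} then transports these properties along the first exact sequence to $\pi_1\overline{H}_{\nn,m}$, and $\overline{H}_{\nn,m}$ inherits a K\"ahler structure as a smooth complex submanifold of $X_{\nn,m}$.

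For the monodromy claims in (2), equation \eqref{eqnSign} gives $\sigma(X_{N_i,m})\neq 0$ for $m\ge 2$ and $N_i\ge 1$, so Theorem \ref{thm3} forces each individual monodromy $\rho_i:\pi_1 S_i \to \mathrm{Out}(\pi_1 R_i')$ to have infinite image. The monodromy of $g=\prod_i\psi_i$ is $\prod_i\rho_i:\pi_1 Z_{\nn,m}\to \prod_i\mathrm{Out}(\pi_1 R_i')\hookrightarrow \mathrm{Out}(\pi_1 F_{\nn,m})$, and its restriction to $\pi_1 H_{\nn,m}$ is the monodromy of the restricted fibration $\overline{H}_{\nn,m}\to H_{\nn,m}$. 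This restricted monodromy has infinite image provided each projection $\pi_1 H_{\nn,m}\to \pi_1 S_i$ is surjective. That surjectivity follows from Corollary \ref{cor:Ex3Cond3}: since each $f_j':S_j\to E'$ induces a surjection on $\pi_1$, any element of $\pi_1 S_i$ can be completed by suitable choices in the other coordinates so that the sum lies in $\ker h'_*$. The $\Z^2$-statement is analogous, coming from the induced outer action of $\Z^2=\pi_1 Z_{\nn,m}/\pi_1 H_{\nn,m}$ on $\pi_1 F_{\nn,m}$ via the normal inclusion $\pi_1 F_{\nn,m}\trianglelefteq \pi_1 X_{\nn,m}$, which inherits the infinite image from that of $\pi_1 Z_{\nn,m}$.

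Part (3) is \emph{the main obstacle}. The plan is to prove in Section~6 the stronger statement that every homomorphism from $\pi_1\overline{H}_{\nn,m}$ to a residually free group must have infinite kernel; since direct products of surface groups are residually free, this immediately rules out any finite-index subgroup of $\pi_1\overline{H}_{\nn,m}$ embedding in such a product. The strategy is: if $\varphi:K\to L$ is an injection from a finite-index subgroup $K\le\pi_1\overline{H}_{\nn,m}$ into a residually free group $L$, then after passing to a further finite-index subgroup we may assume that $N:=K\cap\pi_1 F_{\nn,m}$ is a direct product of closed surface groups of genus $\ge 2$. The conjugation action of $K/N$ on $N$ has infinite image in $\mathrm{Out}(N)$ by the monodromy analysis above. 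On the other hand, the structural results on residually free groups developed in Section~6 will severely constrain how the normaliser of such an $N$ inside a residually free overgroup can act on it, delivering the required contradiction. It is precisely the positivity of $\sigma(X_{N_i,m})$, via Theorem \ref{thm3}, that makes the Kodaira monodromies infinite and powers this contradiction, distinguishing the present situation from that of Theorem \ref{thm4}.
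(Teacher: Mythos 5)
Your treatment of (1) and (2) is essentially the paper's own argument: Theorem \ref{thm2} and Proposition \ref{prop1part2} yield the two exact sequences (the paper invokes asphericity of $Z_{\nn,m}$ rather than the absence of normal abelian subgroups in $\pi_1 F_{\nn,m}$, but both justifications are sanctioned by the remark preceding Proposition \ref{prop1part2}), Corollary \ref{cor:Ex3Cond3}, Theorem \ref{thmLlI1} and Lemma \ref{propbieri} give the finiteness properties, and surjectivity of the $f'_{i*}$ together with $\sigma(X_{N_i,m})\neq 0$ and Theorem \ref{thm3} give the infinite monodromy image, exactly as in the paper (note only that the fibres of $h'$ have complex dimension $r-1$, not $r$, which still satisfies the hypothesis $n\ge 2$). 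For (3) the paper likewise defers to the group-theoretic result of Section 6 (Proposition \ref{p:not-commens}: infinite holonomy image forces every finite-index subgroup to fail residual freeness, via an embedded $\Sigma\rtimes_{\alpha}\Z$ with $\alpha$ of infinite order), which is the same division of labour as your plan, so the proposal is correct and follows the same route.
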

 
 \begin{proof} We have constructed $\overline{H}_{\nn,m}$ as the fundamental group of a K\"ahler manifold,
 so the first assertion in (1) is clear.
 
 We argued above that all of
 the assumptions of Theorem \ref{thm2} are satisfied, and this yields the second short exact sequence in (2).
 Moreover, $Z_{\nn,m}=S_1\times \cdots \times S_r$ is aspherical,  
 so Proposition \ref{prop1part2} applies: this yields the first sequence.
  
  $F_{\nn,m} $ is a finite classifying space for its fundamental group, so by applying
  Lemma \ref{propbieri} to the first short exact sequence in (2) 
  we see that $\pi_1\overline{H}_{\nn,m}$ is of type $\mathcal{F}_n$  if and only if $\pi_1 H_{\nn,m}$ is of type $\mathcal{F}_n$.
  Theorem \ref{thmLlI1} tells us that $\pi _1 H_{\nn,m}$ is of type $\mathcal{F}_{r-1}$ but not of type $\mathcal{F}_r$. Thus (1) is proved.
  
  The holonomy representation of the fibration $\overline{H}_{\nn,m}\rightarrow H_{\nn,m}$ is the restriction 
  $$\nu=(\rho_1,\cdots,\rho_r)|_{\pi_1H_{\nn,m}}: \pi_1 H_{\nn,m} \rightarrow \mathrm{Out}(\pi_1 R_1')\times \cdots \times \mathrm{Out}(\pi_1 R_r')$$
 where $\rho_i$ is the holonomy of $X_{N_i,m}\to S_i$. Since the branched covering maps $f'_i$ are surjective on fundamental groups it follows from the short exact sequence induced by $h'$ that the projection of $\nu(\pi_1H)$ to $\mathrm{Out}(\pi_1 R_i')$ is $\rho_i(\pi_1 S_i)$. In particular, the map $\nu$ has infinite image in $\mathrm{Out}(\pi_1 F)$ as each of the $\rho_i$ do.  This proves (2).
 
 Assertion (3) follows immediately from (2) and the group theoretic Proposition \ref{p:not-commens} below.
 \end{proof}

\begin{remark}[Explicit presentations]
 The groups $\pi_1\overline{H}_{\nn,m}$
 constructed above are fibre products over $\ZZ^2$. Therefore, given finite presentations for the groups $\pi_1 X_{N_i,m}$, $1\leq i \leq r$, we could apply an algorithm developed by the first author, Howie, Miller and Short \cite{BriHowMilSho-13} to construct explicit finite presentations for our examples. 
 An implementation by the second author in a similar situation \cite{Llo-16} demonstrates the practical nature
 of this algorithm.
 \end{remark}

\section{Commensurability to direct products}
\label{sec:ProdCrit}

Each of the new K\"ahler groups $\G :=\pi_1\overline{H}$ constructed in
Theorems 1.1 and 1.2 fits into a short exact sequence of finitely generated groups
\begin{equation}\label{ses}
1\to \Delta \to \G \to Q\to 1,
\end{equation}
where $\Delta=\Sigma_1\times \cdots \times \Sigma_r$ is a product of $r\geq 1$ closed surface groups $\Sigma_i$
of genus $g_i\geq 2$. 

Such short exact sequences arise whenever one has a fibre bundle whose base $B$ has fundamental
group $Q$ and whose fibre $F$ is a product of surfaces: the short exact sequence is the beginning of the long exact sequence
in homotopy, truncated using the observation that since $\Delta$ has no non-trivial normal abelian subgroups,
the map $\pi_2B\to \pi_1F$ is trivial. For us, the fibration in question is
$\overline{H}\to H$, and (\ref{ses}) is a special case of the sequence in Proposition 2.3. In the setting of Theorem 1.1, the holonomoy representation
$Q\to {\rm{Out}}(\Delta)$ has infinite image, and in the setting of Theorem 1.2
it has finite image.

In order to complete the proofs of the theorems stated in the introduction, we must determine (i) when groups
such as $\G$ can be embedded in a product of surface groups, (ii) when they contain subgroups of finite index
that admit such embeddings, and (iii) when they are commensurable with residually free groups. In this
section we shall answer each of these questions.

Throughout this section we shall use the term \textit{surface group} to mean the fundamental group of a closed surface of negative Euler characteristic (which may be non-orientable).

\subsection{Residually free groups and limit groups}
 
A group $G$ is \textit{residually free} if for every element $g\in G\setminus \left\{1\right\}$  there is a free group $\F_r$ on $r$ generators and a homomorphism $\epsilon : G\rightarrow \F _r$ such that $\epsilon(g)\neq 1$.  A group $G$ is a \textit{limit group} (equivalently, \textit{fully residually free}) if for every finite subset 
$S \subset G$ there is a  homomorphism to a free group $\phi_S: G \rightarrow \F$ such
that the restriction of $\phi_S$ to $S$ is injective. 

It is easy to see that direct products of residually free groups are residually free. In contrast, the product of two or more non-abelian limit groups is not a limit group.

Limit groups are a fascinating class of groups that have been intensively studied in recent years at the confluence of geometry, group theory and logic \cite{Sel-01,KhaMya-98}. They admit several equivalent definitions, the equivalence of which confirms the
aphorism that, from many different perspectives, they are the natural class of ``approximately free groups".
A finitely generated group is residually free if and only if it is a subgroup of a direct product of finitely
many limit groups \cite[Corollary 19]{BauMyaRem-99} (see also p.4 and in particular Theorem C in \cite{BriHowMilSho-13}).

All surface groups are limit groups \cite{Bau-62} {\em{except}} 
 $\G_{-1}=\left\langle a,b,c\mid a^2b^2c^2\right\rangle $,
 the fundamental group of the non-orientable closed surface with euler characteristic $-1$, which 
 is not residually free: in a free group, any triple of elements satisfying the
equation $x^2y^2=z^2$ must commute \cite{Lyn-59}, so $[a,b]$ lies in the
kernel of every homomorphism from $\Gamma_{-1}$ to a free group.

\subsection{Infinite holonomy}

\begin{proposition} \label{p:not-commens}
If the holonomy representation $Q\to{\rm{Out}}(\Delta)$ associated to (\ref{ses})
has infinite image, then no subgroup of finite index in $\G$ is residually free, and therefore $\G$
is not commensurable with a subgroup of a direct product of surface groups.
\end{proposition}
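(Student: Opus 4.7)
The plan is to assume, for contradiction, that a finite-index subgroup $\Gamma_0\leq\G$ is residually free, and to derive a contradiction with the infinite-holonomy hypothesis. First I would replace $\Gamma_0$ by a further finite-index subgroup to ensure that the inherited holonomy $\Gamma_0/\Delta_0\to\mathrm{Out}(\Delta_0)$ (where $\Delta_0:=\Delta\cap\Gamma_0$ and $\Sigma_i^0:=\Sigma_i\cap\Gamma_0$) takes values in $\prod_i\mathrm{Out}(\Sigma_i^0)$, avoiding factor permutations; this is possible since $\prod_i\mathrm{Out}(\Sigma_i^0)$ has finite index in $\mathrm{Out}(\Delta_0)$. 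Each $\Sigma_i^0$ will then be normal in $\Gamma_0$, and since the total holonomy still has infinite image, at least one of its projections to a single $\mathrm{Out}(\Sigma_i^0)$-factor is infinite; after reindexing, this is $i=1$.

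Next I would embed $\Gamma_0\hookrightarrow\prod_{j=1}^n L_j$ with each $L_j$ a limit group and each coordinate projection $\pi_j$ surjective. The main input is the CSA property of limit groups: centralizers of non-trivial elements are abelian, so a non-trivial element of $L_j$ cannot commute with two non-commuting elements of $L_j$ (by uniqueness of the maximal abelian subgroup through a given non-trivial element). Applied to the commuting pairs $\pi_j(\Sigma_i^0),\pi_j(\Sigma_k^0)$ for $i\neq k$, this forces that in each $L_j$ at most one $\pi_j(\Sigma_i^0)$ is non-abelian, and that if $\pi_j(\Sigma_i^0)$ is non-abelian then $\pi_j(\Sigma_k^0)=\{1\}$ for every $k\neq i$. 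Since each $\Sigma_i^0$ is itself non-abelian and embeds into $\prod_jL_j$, the assignment $i\mapsto j(i)$ picking the unique index with $\pi_{j(i)}(\Sigma_i^0)$ non-abelian is then well-defined and injective; after reordering I may take $j(i)=i$.

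The conjugation action of $\Gamma_0$ on $\Sigma_1^0$ will then factor through $\pi_1$, because $\gamma\in\Gamma_0$ acts on $\Sigma_1^0\subseteq L_1$ coordinate-wise and only the first coordinate is non-trivial. Combined with $C_{L_1}(\Sigma_1^0)=\{1\}$ (CSA applied to two non-commuting elements of $\Sigma_1^0$), this shows that $\pi_1(\Gamma_0)/\Sigma_1^0$ embeds in $\mathrm{Out}(\Sigma_1^0)$ as the image of the holonomy and is therefore infinite. The hard part will be to contradict this: the group $\pi_1(\Gamma_0)$, as a subgroup of the limit group $L_1$, is itself a limit group, and the structural fact I need to invoke is that a limit group contains no finitely generated non-abelian normal subgroup of infinite index. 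This can be deduced from the toral relative hyperbolicity of limit groups (with parabolics the maximal non-cyclic abelian subgroups) via the general dichotomy that in a non-elementary relatively hyperbolic group a normal subgroup is either contained in a parabolic or of finite index; here the parabolics are abelian, so the non-abelian $\Sigma_1^0$ cannot lie in one, forcing $[\pi_1(\Gamma_0):\Sigma_1^0]<\infty$ and yielding the desired contradiction.

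For the commensurability assertion, I would observe that any subgroup of a direct product of surface groups of genus $\geq 2$ is residually free (such surface groups are limit groups, and subgroups of direct products of residually free groups are residually free), so if $\G$ were commensurable with such a subgroup it would admit a finite-index residually free subgroup, contrary to what has just been proved.
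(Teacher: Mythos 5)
Your opening reductions (killing the factor permutations, noting some $\rho_i$ still has infinite image, embedding $\Gamma_0$ in a product of limit groups with surjective projections) and your CSA observation are fine: for each coordinate $j$ at most one of the commuting images $\pi_j(\S_i^0)$ can be non-abelian, and if $\pi_j(\S_i^0)$ is non-abelian then $\pi_j(\S_k^0)=\{1\}$ for $k\neq i$. But the next step is a genuine gap. What you have shown is that the \emph{other} fibre factors die in the coordinate $j(1)=1$; you have not shown, and it does not follow, that $\S_1^0$ has trivial image in the remaining coordinates (it may well have non-trivial abelian images, or non-abelian images in coordinates not claimed by any other factor), nor even the weaker statement that $\pi_1$ is injective on $\S_1^0$ --- and that weaker statement is exactly what your argument needs. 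Indeed, for $\gamma\in\Gamma_0\cap\ker\pi_1$ and $\sigma\in\S_1^0$ normality only gives $[\gamma,\sigma]\in\S_1^0\cap\ker\pi_1$; unless this kernel is trivial (and a priori it can be a large, infinitely generated normal subgroup of the surface group $\S_1^0$ with non-abelian quotient), conjugation by $\gamma$ need not be trivial on $\S_1^0$, so the holonomy does not factor through $\pi_1$, the image in $\mathrm{Out}(\S_1^0)$ cannot be identified with a quotient of $\pi_1(\Gamma_0)/\pi_1(\S_1^0)$, and your intended contradiction (a finitely generated non-abelian normal subgroup of infinite index in a limit group) is never reached. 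Repairing this requires real additional structure theory of normal subgroups in subdirect products of limit groups, of the kind carried out in \cite{BriHowMilSho-09} and in the intersection/rank analysis of Proposition \ref{propFinHol}; it is not a matter of bookkeeping. A secondary point: your proposed derivation of the limit-group fact from a ``general dichotomy'' for relatively hyperbolic groups is false as stated (the fibre subgroup of a closed fibered hyperbolic $3$-manifold is finitely generated, normal, of infinite index and non-parabolic); the fact you need is true but should simply be cited, as the paper does, from \cite{BriHow-07}. Also, the claim that the holonomy of $\Gamma_0$ on $\Delta_0$ is still infinite deserves a (short) argument rather than an assertion.

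The paper sidesteps all of this by a reduction you are missing: since some $\rho_i$ has infinite image and infinite subgroups of mapping class groups contain elements of infinite order, $\G$ contains $M=\S_i\rtimes_\alpha\Z$ with $\alpha$ of infinite order in $\mathrm{Out}(\S_i)$; any finite-index $\G_0$ meets it in $M_0=\S\rtimes_\beta\Z$ with $\beta$ again of infinite order, and this closed aspherical $3$-manifold group is not residually free (Theorem A of \cite{BriHowMilSho-09}, or the paper's short direct argument, which shows $M_0$ would be a subdirect product of at most two non-abelian limit groups and then contradicts hyperbolicity of $\S$). Your limit-group analysis does work essentially verbatim inside such an $M_0$, where the fibre is the only normal surface factor and the $\Z^3$/CSA constraints make the coordinate bookkeeping trivial; so the missing idea is precisely this reduction to the mapping torus. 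Your final paragraph on the commensurability assertion is correct.
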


\begin{proof}
Any automorphism of $\Delta=\Sigma_1\times \cdots \times \Sigma_r$ must leave the set of subgroups $\left\{\Sigma_1,\cdots,\Sigma_r\right\}$ invariant (cf. \cite[Prop.4 ]{BriMil-04}). Thus
${\rm{Aut}}(\Delta)$ contains a subgroup of finite index that leaves each
$\Sigma_i$ invariant and $\mathcal{O}={\rm{Out}}(\S_1)\times\dots\times
{\rm{Out}}(\S_r)$ has finite index in
${\rm{Out}}(\Delta)$. 

Let $\rho:Q\to{\rm{Out}}(\Delta)$ be the holonomy representation, let
$Q_0=\rho^{-1}(\mathcal{O})$, and let $\rho_i:Q_0\to {\rm{Out}}(\S_i)$
be the obvious restriction. If the image of $\rho$ is infinite, then the
image of at least one of the $\rho_i$ is infinite. Infinite subgroups
of mapping class groups have to contain elements of infinite order (e.g.\cite[Corollary 5.14]{Kap-01}), so it follows that $\G$ contains a subgroup
of the form $M=\S_i\rtimes_\alpha\Z$, where $\alpha$ has infinite order in
${\rm{Out}}(\S_i)$. If $\G_0$ is any subgroup of finite index in $\G$,
then $M_0=\G_0\cap M$ is again of the form $\S\rtimes_\beta\Z$, where $\S=\G_0\cap\S_i$ is a hyperbolic surface group and
 $\beta\in{\rm{Out}}(\S)$ (which is the restriction of a power of $\alpha$) has infinite
 order.

$M_0$ is the fundamental group of a closed aspherical
3-manifold that does not virtually split as a direct product, and therefore it
cannot be residually free, by Theorem A of \cite{BriHowMilSho-09}. As any subgroup of
a residually free group is residually free, it follows that $\G_0$ is not
residually free.

For the reader's convenience, we give a more direct proof that $M_0$
is not residually free. If it were, then by \cite{BauMyaRem-99} it would be
a subdirect product of limit groups $\Lambda_1\times\dots\times\Lambda_t$. Projecting away from factors that
$M_0$ does not intersect, we may assume that $\Lambda_i\cap M_0\neq 1$
for all $i$. As $M_0$ does not contain non-trivial normal
abelian subgroups, it follows that 
the $\Lambda_i$ are non-abelian. As limit groups
are torsion-free and $M_0$ does not contain $\Z^3$, it follows that $t\le 2$.
Replacing each
$\Lambda_i$ by the coordinate projection $p_i(M_0)$, we may assume that $M_0
<\Lambda_1\times\Lambda_2$ is a subdirect product (i.e. maps onto both
$\Lambda_1$ and $\Lambda_2$). Then, for $i=1,2$,
the intersection $M_0\cap\Lambda_i$ is normal in $\Lambda_i=p_i(M_0)$.
Non-abelian limit groups do not have non-trivial normal abelian subgroups,
so $I_i=M_0\cap\Lambda_i$ is non-abelian. But any non-cyclic subgroup
of $M_0$ must intersect $\S$, so $I_1\cap\S$ and $I_2\cap\S$ are infinite,
disjoint, commuting, subgroups of $\S$. This contradicts the fact that $\S$
is hyperbolic.
\end{proof}

\begin{corollary} The group $\pi_1\overline{H}$ constructed in Theorem 1.1 is not commensurable with a subgroup of a direct product of surface groups.
\end{corollary}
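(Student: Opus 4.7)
The plan is to derive this corollary as an immediate consequence of Proposition \ref{p:not-commens} applied to the first short exact sequence established in Theorem \ref{thmKodExNotComm}(2), namely
\[
1\to \pi_1 F_{\nn,m}\to \pi_1\overline{H}_{\nn,m}\to \pi_1 H_{\nn,m}\to 1,
\]
where $F_{\nn,m}=R_1'\times\cdots\times R_r'$. All of the work has already been done; this is essentially a matching exercise between the conclusion of Theorem \ref{thmKodExNotComm} and the hypotheses of Proposition \ref{p:not-commens}.

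First I would check that the kernel $\pi_1 F_{\nn,m}$ has the required form $\Sigma_1\times\cdots\times\Sigma_r$ with each $\Sigma_i$ a closed orientable surface group of genus at least $2$. By the construction in Section 4, each $R_i'$ is the generic fibre of the regular holomorphic map $\psi_i:X_{N_i,m}\to S_i$, realised as an $m$-sheeted branched covering of $R_{2N_i+1}$ (a closed surface of genus $2N_i+1\geq 3$) with two branch points of order $m$. Hence $\pi_1 R_i'$ is a closed surface group of genus $\geq 2$, and the kernel has precisely the product structure that Proposition \ref{p:not-commens} requires.

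Next I would identify the holonomy representation appearing in Proposition \ref{p:not-commens} with the monodromy representation $\pi_1 H_{\nn,m}\to{\rm{Out}}(\pi_1 F_{\nn,m})$ of the fibration that produces the above sequence. Theorem \ref{thmKodExNotComm}(2) states explicitly that this representation has infinite image, so the hypothesis of Proposition \ref{p:not-commens} is satisfied.

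Applying Proposition \ref{p:not-commens} then yields that no subgroup of finite index in $\pi_1 \overline{H}_{\nn,m}$ embeds in a direct product of surface groups, which is exactly the assertion of the corollary. There is no real obstacle at this stage: the substantive content was already present in establishing infinite holonomy via the non-vanishing signature computation \eqref{eqnSign} combined with Theorem \ref{thm3}, and in the non-trivial group-theoretic argument of Proposition \ref{p:not-commens} (which pinpoints a virtually fibred $3$-manifold subgroup that cannot be residually free).
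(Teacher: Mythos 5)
Your proposal is correct and follows exactly the route the paper intends: the corollary is an immediate application of Proposition \ref{p:not-commens} to the short exact sequence $1\to\pi_1 F_{\nn,m}\to\pi_1\overline{H}_{\nn,m}\to\pi_1 H_{\nn,m}\to 1$ from Theorem \ref{thmKodExNotComm}(2), whose holonomy has infinite image. Your additional check that each $\pi_1 R_i'$ is a closed surface group of genus at least $2$ is a harmless verification the paper leaves implicit.
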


\subsection{Finite holonomy}
When the holonomy $Q\to{\rm{Out}}(\Delta)$
is finite, it is easy to see that $\G$ is virtually a direct product.

\begin{proposition}
In the setting of (\ref{ses}), if the holonomy representation $Q\to{\rm{Out}}(\Delta)$ is finite, then $\G$ has a subgroup of finite index that is
residually free {\rm{[}}respectively, is a subgroup of a direct
product of surface groups{\rm{]}} if and only if $Q$ has such a subgroup of 
finite index.
\label{p:infholo}
\end{proposition}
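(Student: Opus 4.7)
The plan is to exploit the finiteness of the holonomy representation $\rho:Q\to{\rm{Out}}(\Delta)$ to pass to a finite-index subgroup of $\G$ that splits as a genuine direct product $\Delta\times Q_1$; once this is done, the proposition reduces to a routine exchange of properties between the two factors, since $\Delta$ itself is already a direct product of surface groups (hence residually free). The key point enabling this reduction is that every factor $\Sigma_i$ of $\Delta=\Sigma_1\times\cdots\times\Sigma_r$ is a hyperbolic surface group, so $Z(\Delta)=1$.

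To implement the splitting, set $Q_1:=\ker(\rho)$, a finite-index subgroup of $Q$, and let $\G_1\le\G$ be its preimage. Conjugation by any $\tilde q\in\G_1$ induces an inner automorphism of $\Delta$, say ${\rm{inn}}_\Delta(d_q)$, and the centerlessness of $\Delta$ forces $d_q$ to be uniquely determined by $\tilde q$. The element $\hat q:=\tilde q\, d_q^{-1}$ is then the unique lift of $q$ lying in $C_{\G_1}(\Delta)$; uniqueness makes $q\mapsto\hat q$ a homomorphism, and combining this with $C_{\G_1}(\Delta)\cap\Delta=Z(\Delta)=1$ and $\G_1=\Delta\cdot C_{\G_1}(\Delta)$ yields an internal decomposition $\G_1\cong\Delta\times Q_1$.

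Let $\mathcal P$ denote either ``is residually free'' or ``embeds in a direct product of surface groups''. Both properties pass to subgroups and are preserved by forming a direct product with a product of surface groups (residual freeness is preserved under arbitrary direct products, and a direct product of products of surface groups is again one), so in particular $\Delta$ itself has $\mathcal P$. If $Q$ has a finite-index subgroup $Q'$ with $\mathcal P$, then $\Delta\times(Q'\cap Q_1)$ is a finite-index subgroup of $\G$ with $\mathcal P$. Conversely, given a finite-index $\G'\le\G$ with $\mathcal P$, the intersection $H:=\G'\cap\G_1$ has finite index in $\G_1=\Delta\times Q_1$ and inherits $\mathcal P$, and then $H\cap(\{1\}\times Q_1)$ is a finite-index subgroup of $Q$ with $\mathcal P$. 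The main obstacle is the splitting step, which fails without $Z(\Delta)=1$; I would also flag that quotients of residually free groups need not be residually free, so the forward direction cannot be handled by projecting $H$ to $Q_1$, and the direct product structure of $\G_1$ is what makes the alternative of intersecting with the coordinate subgroup $\{1\}\times Q_1$ available.
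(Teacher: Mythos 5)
Your proof is correct and follows essentially the same route as the paper: pass to $\G_1$, the preimage of $Q_1=\ker(Q\to{\rm{Out}}(\Delta))$, use $Z(\Delta)=1$ to split $\G_1\cong\Delta\times Q_1$, and then transfer the property between $\G$ and $Q$ using closure under subgroups and finite direct products together with residual freeness of surface groups. You merely spell out details (the explicit splitting and the use of the coordinate intersection rather than a projection) that the paper's terser proof leaves implicit.
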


\begin{proof} 
Let $Q_1$ be the kernel of $Q\to{\rm{Out}}(\Delta)$
and let $\G_1<\G$ be the inverse image of $Q_1$.
Then, as the centre of $\Delta$ is trivial, $\G_1 \cong \Delta\times Q_1$.

Every subgroup of a residually free group is residually free, and the direct product of residually free groups is residually free. Thus
the proposition follows from the fact that orientable surface groups are residually
free.
\end{proof}

\subsection{Residually-Free K\"ahler groups}

We begin with a non-trivial example of a Kodaira fibration whose
fundamental group is residually-free.

\begin{example}
Let $G$ be any finite group and for $i=1,2$ let $q_i:\Sigma_i\to G$ be an
epimorphism from a hyperbolic surface group $\Sigma_i=\pi_1S_i$. Let $P<\Sigma_1\times\Sigma_2$
be the fibre product, i.e. $P=\{(x,y)\mid q_1(x)=q_2(y)\}$. The projection onto the
second factor $p_2: P\rightarrow \Sigma _2$ induces a short exact sequence 
\[
 1 \rightarrow \Sigma' _1 \rightarrow P \rightarrow \Sigma_2\rightarrow 1
\]
with $\Sigma'_1=\ker q_1\unlhd \Sigma_1$ a finite-index normal subgroup. The action of $P$ by conjugation on $\Sigma'_1$ defines a homomorphism $\Sigma_2\to
{\rm{Out}}(\Sigma_1')$ that factors through
 $q_2: \Sigma_2 \rightarrow G=\Sigma_1/\Sigma'_1$. 
 
Let $S_1'\to S_1$ be the regular covering of $S_1$ corresponding to $\Sigma_1' \unlhd \Sigma_1$.
Nielsen realisation \cite{Ker-83} realises the action of $\Sigma_2$ on
$\Sigma_1'$ as a group of diffeomorphisms of $S_1'$, and thus we obtain a
smooth surface-by-surface bundle $X$ with $\pi_1 X=P$,
that has fibre $S'_1$, base $S_2$ and holonomy
representation $q_2$. Theorem \ref{thm3} and Addendum \ref{corkot} imply that $X$ can be endowed with the structure of a Kodaira fibration.  
\end{example}

Our second example illustrates the fact that torsion-free
K\"ahler groups that are virtually residually-free need not be residually-free.

\begin{example}  Let $R_g$ be a closed orientable surface of genus $g$ and imagine it as the connected sum of $g$ handles placed in cyclic order around a sphere. We
consider the automorphism that rotates this picture through $2\pi/g$. Algebraically,
if we fix the usual
presentation
$\pi_1 R_g=\left\langle \alpha_1,\beta_1,\cdots, \alpha_g,\beta_g\mid \left[\alpha_1,\beta_1\right]\cdots \left[\alpha_g,\beta_g\right]\right\rangle$,
this rotation (which has two fixed points)
defines an automorphism $\phi$ that sends $\alpha_i\mapsto \alpha_{i+1}$, $\beta_i\mapsto \beta_{i+1}$ for $1\leq i\leq g-1$ and $\alpha_g\mapsto \alpha_1$, $\beta_g\mapsto \beta _1$. Thus $\left\langle \phi\right\rangle \leq \aut(\pi_1 R_g)$ is a cyclic subgroup of order $g$. 

Let $T_h$ be an arbitrary closed surfaces of genus $h\geq 2$ and let $\rho: \pi_1 T_h\to \left\langle \overline{\phi} \right\rangle\cong \ZZ/g\ZZ \leq \out(\pi_1 R_g)$ be the map defined by sending each element of a standard symplectic basis for $H_1(\pi_1 T_h,\ZZ)$ to $\overline{\phi}:=\phi \cdot \inn(\pi_1 R_g)$. 
Consider a Kodaira fibration $R_g\hookrightarrow X'\rightarrow T_h$ with holonomy $\rho$. It follows from Proposition  \ref{propFinHol}
that $\pi_1 X'$ is not residually free.
And it follows from  Theorem \ref{t:resfree} that if the Kodaira fibrations in Theorem \ref{thm4} are of this form then the K\"ahler group $\pi_1 \overline{H}$ is not residually free.
\end{example}

\begin{lemma}  
Let $S$ be a hyperbolic surface group and let $G$ be a group that contains $S$ as a normal subgroup.
The following conditions are equivalent:

\begin{enumerate}[(i)]
\item the image of the map $G\to\aut(S)$ given by conjugation is torsion-free and the image of $G\to\out(S)$ is finite;

\item one can embed $S$ as a normal subgroup of finite index in a surface group $\S$ so that $G\to\aut(S)$ factors through $\inn(\S)\to \aut(S)$. 
\end{enumerate}
\label{lemFinHolOneFact}
\end{lemma}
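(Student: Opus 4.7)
The plan is to prove both directions separately.

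For $(ii)\Rightarrow(i)$, I would first note that a hyperbolic surface group $\S$ containing $S$ as a finite-index subgroup is itself a closed hyperbolic surface group, hence torsion-free with trivial centre. Therefore $\inn(\S)\cong \S/Z(\S)\cong \S$ is torsion-free, and since the image of $G\to\aut(S)$ lies inside $\inn(\S)$, that image is torsion-free. The composition $\inn(\S)\to\aut(S)\to\out(S)$ has image $\inn(\S)/\inn(S)\cong \S/S$, which is finite; so the image of $G$ in $\out(S)$ is finite.

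For $(i)\Rightarrow(ii)$, I would let $H\leq\aut(S)$ denote the image of $G$. Because $G$ contains $S$ as a normal subgroup on which it acts by conjugation, $\inn(S)\subseteq H$, and $\inn(S)$ is normal in $H$ (being normal in all of $\aut(S)$). The hypothesis in (i) says $H$ is torsion-free and $H/\inn(S)$ is finite. Since $Z(S)=1$, we have $\inn(S)\cong S$, so $H$ is a torsion-free group containing a closed hyperbolic surface group as a normal subgroup of finite index.

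The heart of the argument is then to recognise $H$ abstractly as a closed surface group. I would do this by one of two standard routes: either (a) Serre's theorem on cohomological dimension of finite-index overgroups shows $H$ is a $\mathrm{PD}_2$ group, and every $\mathrm{PD}_2$ group is a closed surface group by the Eckmann--M\"uller--Linnell theorem; or (b) the induced finite subgroup $H/\inn(S)$ of $\out(S)$ is realised as a group of diffeomorphisms of the underlying surface $R$ (with $\pi_1R=S$) by Kerckhoff's Nielsen realisation, and torsion-freeness of $H$ forces this action to be free, so the quotient is a closed surface with fundamental group $H$. This identification is the main obstacle of the argument; the rest is bookkeeping.

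Setting $\S:=H$ with $S\hookrightarrow \S$ given by $S\cong\inn(S)\leq H$, I would finish by observing that the centralizer of $S$ in $\S$ is trivial: any such element has a positive power lying in the finite-index subgroup $S$, and that power centralizes $S$, so lies in $Z(S)=1$; torsion-freeness of $\S$ then forces the element itself to be trivial. Hence the conjugation map $\S\to\aut(S)$ is injective with image precisely the subgroup $H\leq\aut(S)$. Since $\S$ is centreless, $\inn(\S)\cong \S$, and the inclusion $H\hookrightarrow \aut(S)$ is exactly the map $\inn(\S)\to\aut(S)$ induced by the normality $S\trianglelefteq \S$. By construction $G\to\aut(S)$ has image $H$, so it factors through $\inn(\S)\to\aut(S)$ as required.
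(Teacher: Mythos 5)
Your proof is correct and follows essentially the same route as the paper: for (i)$\Rightarrow$(ii) you take $\S$ to be the image of $G$ in $\aut(S)$ and invoke the fact that a torsion-free finite extension of a surface group is a surface group (the paper simply cites this fact, while you indicate the standard proofs via Serre/Eckmann--M\"uller or Nielsen realisation), and for (ii)$\Rightarrow$(i) the key point is, as in the paper, triviality of the centraliser of $S$ in $\S$. The only small imprecision is in (ii)$\Rightarrow$(i): the image of $G$ lies in the \emph{image} of $\inn(\S)\to\aut(S)$ rather than in $\inn(\S)$ itself, so torsion-freeness requires injectivity of that map --- which follows at once from the same centraliser argument you spell out at the end.
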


\begin{proof} 
If (i) holds then the image $A$ of $G\to\aut(S)$ is torsion free and contains $\inn(S)\cong S$ as a subgroup of finite index. A torsion-free
finite extension of a surface group is a surface group, so we can define $\S=A$.
The converse follows immediately from the fact that centralisers of non-cyclic subgroups in hyperbolic surface groups are trivial.
\end{proof}

Lemma \ref{lemFinHolOneFact} has the following geometric interpretation,
in which $\Sigma$ emerges as $\pi_1(\widetilde{R}/\Lambda)$.

\begin{addendum}
 With the hypotheses of Lemma \ref{lemFinHolOneFact}, let $R$ be a closed 
  surface with $S=\pi_1 R$, let $\L$ be the image of $G\to \aut(S)$ and let $\overline{\L}$ be the image of $G\to \out(S)$. Then conditions (i) and (ii) 
  are equivalent to the geometric condition that $\overline{\L}$ is finite and the action $\overline{\L}\to \mathrm{Homeo}(R)$ given by Nielsen realisation is free.
\end{addendum}
\begin{proof}
Assume that condition (i) holds. Since $\overline{\L}$ is finite,
Kerckhoff's solution to the Nielsen realisation problem \cite{Ker-83} enables us
to realise $\L$ as a cocompact Fuchsian group: $\overline{\L}$ can be realised as a group of isometries of a hyperbolic metric $g$ on $R$ and $\L$ is the discrete group of isometries of the universal cover $\widetilde{R}\cong \H^2$ consisting of all lifts of $\overline{\L}\leq \isom(R,g)$. As a Fuchsian group, $\L$ is torsion-free 
if and only if its action on $\widetilde{R}\cong \H^2$ is free, and this is the
case if and only if the action of $\overline{\L}=\L/S$ on $R$ is free.
\end{proof}

\begin{proposition} Consider a short exact sequence $1\to F \to G \to Q \to 1$, where $F$ is a direct product of finitely many hyperbolic surface groups $S_i$, each of which is
normal in $G$, and $Q$ is torsion-free. The following conditions are equivalent:

\begin{enumerate}[(i)]
\item $G$ can be embedded in a direct product of surface groups;

\item $Q$ can be embedded in a direct product of surface groups and the image of each of the conjugation actions
$G\to\aut(S_i)$ is torsion-free and has finite image in $\out(S_i)$.
\end{enumerate}
\label{propFinHol}
\end{proposition}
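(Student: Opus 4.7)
The plan is to prove both directions by applying Lemma \ref{lemFinHolOneFact} factor-by-factor to each $S_i$ and assembling the resulting data with the embedding information for $Q$. For the direction (ii) $\Rightarrow$ (i), Lemma \ref{lemFinHolOneFact} yields for every $i$ a hyperbolic surface group $\Sigma_i$ containing $S_i$ as a finite-index normal subgroup, together with a homomorphism $\phi_i : G \to \Sigma_i$ that realises the conjugation action of $G$ on $S_i$ through $\Sigma_i \cong \inn(\Sigma_i) \hookrightarrow \aut(S_i)$. Combined with the given embedding $\iota : Q \hookrightarrow \Lambda_1 \times \dots \times \Lambda_k$ (into a product of surface groups, respectively, of non-abelian limit groups and $\Gamma_{-1}$) and the quotient $\pi : G \to Q$, these define
\[
\Phi := (\phi_1,\dots,\phi_r,\iota\circ\pi) : G \longrightarrow \Sigma_1 \times \dots \times \Sigma_r \times \Lambda_1 \times \dots \times \Lambda_k,
\]
whose target is of the required form. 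Any $g \in \ker\Phi$ lies in $F = \prod_i S_i$; writing $g=(s_1,\dots,s_r)$, the element $s_k$ centralises $S_i$ for $k \neq i$, so $\phi_i(s_k) \in C_{\Sigma_i}(S_i)$, which is trivial because $S_i$ is a non-abelian subgroup of the hyperbolic surface group $\Sigma_i$. Hence $\phi_i(g) = s_i \in S_i \hookrightarrow \Sigma_i$, and $\phi_i(g)=1$ forces $s_i=1$.

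For (i) $\Rightarrow$ (ii), assume $G \hookrightarrow \prod_{j=1}^m \Lambda_j$ with each $\Lambda_j$ of the specified type, so $G$ is residually free. I would verify the three conditions in (ii) in turn. First, the image of $G \to \out(S_i)$ must be finite: otherwise the image of $G \to \out(F)$ would be infinite and Proposition \ref{p:not-commens} would contradict residual freeness of all finite-index subgroups of $G$. Second, $A_i := G/C_G(S_i)$ is torsion-free. To see this, take $g=(g_j) \in G$ with $g^n \in C_G(S_i)$ and set $T_{ij} := p_j(S_i) \trianglelefteq p_j(G)$. Then $g_j^n$ centralises $T_{ij}$ in $\Lambda_j$ for every $j$; for those $j$ with $T_{ij}$ non-abelian, commutative transitivity in the limit group $\Lambda_j$ (and in $\Gamma_{-1}$, via its index-two surface subgroup) gives $C_{\Lambda_j}(T_{ij})=1$, so $g_j^n=1$ and then $g_j=1$ by torsion-freeness of $\Lambda_j$. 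Since $S_i$ has no non-trivial normal abelian subgroup, $S_i$ injects into the product of its non-abelian projections $T_{ij}$, so $g$, which acts trivially on each such coordinate, acts trivially on $S_i$; i.e.\ $g \in C_G(S_i)$.

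The main obstacle is the remaining condition: $Q$ itself must embed in a product of the required form. The first two sub-claims together with Lemma \ref{lemFinHolOneFact} imply that each $A_i$ is a torsion-free finite extension of $S_i$, hence a surface group that can serve as $\Sigma_i$. The combined map $\rho := (\phi_1,\dots,\phi_r) : G \to \prod_i \Sigma_i$ has kernel $C_G(F) = \bigcap_i C_G(S_i)$, which intersects $F$ trivially (because $Z(F)=1$) and therefore maps injectively onto a finite-index subgroup $Q_0$ of $Q$; since $C_G(F) \subset G$ is residually free, $Q_0$ embeds in a product of the required form by the Baumslag--Myasnikov--Remeslennikov theorem. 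Promoting this virtual embedding to an embedding of $Q$ is the crux; the plan is to exploit the torsion-freeness of each $A_i$ together with the structure of the short exact sequence $1 \to F \to G \to Q \to 1$ to realise the finite extension $Q/Q_0$ through the direct-product container $\prod_i \Sigma_i$, assembling the coordinate projections of $G \hookrightarrow \prod_j \Lambda_j$ so that they descend cleanly to $Q$ without introducing torsion.
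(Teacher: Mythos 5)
Your proof of (ii) $\Rightarrow$ (i) is correct and is essentially the paper's argument: apply Lemma \ref{lemFinHolOneFact} to each $S_i$, adjoin the coordinate $\iota\circ\pi$, and check injectivity on $F$ using $C_{\Sigma_i}(S_i)=1$. In the converse direction, your verification of the $\aut$/$\out$ conditions takes a genuinely different route from the paper and is essentially sound: finiteness of the image in $\out(S_i)$ via Proposition \ref{p:not-commens} (in the bracketed case $G$ itself need not be residually free because of the $\G_{-1}$ factors, so pass first to a finite-index subgroup --- the contradiction with Proposition \ref{p:not-commens} still applies), and torsion-freeness of $G/C_G(S_i)$ by the coordinatewise centraliser/commutative-transitivity argument, which works. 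One small point: that $C_G(F)$ maps onto a \emph{finite-index} subgroup of $Q$ does not follow from $Z(F)=1$ alone; it uses the finiteness of the image of $G$ in $\out(F)$, which you do have at that stage but do not invoke.

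The genuine gap is the clause of (ii) that you yourself flag as the crux: that $Q$ embeds in a product of the required type. You only obtain an embedding of a finite-index subgroup $Q_0\cong C_G(F)$ (and via Baumslag--Myasnikov--Remeslennikov only into a product of limit groups), and the proposed ``promotion'' cannot work as described: embeddability into a torsion-free direct product does not pass from a finite-index subgroup to the ambient group --- torsion in $Q$ is an absolute obstruction. The paper argues quite differently: it works inside the given embedding $G\leq \Lambda_1\times\cdots\times\Lambda_m$, arranged to be a full subdirect product, and shows by a normality/centraliser analysis that, after reordering, each $S_i$ sits as a finite-index normal subgroup of a single factor $\Lambda_i$ (forcing $\Lambda_i$ to be a surface group) and projects trivially to the other factors. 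This structural fact is what your approach is missing: it gives at once that $G\to\aut(S_i)$ factors through $\inn(\Lambda_i)$, hence the $\aut$/$\out$ conditions via Lemma \ref{lemFinHolOneFact}, and it pins $F=S_1\times\cdots\times S_k$ into the first $k$ coordinates, so that $Q=G/F$ injects into $(\Lambda_1/S_1)\times\cdots\times(\Lambda_k/S_k)\times\Lambda_{k+1}\times\cdots\times\Lambda_m$ with finite factors $\Lambda_i/S_i$. Be aware that even then the $Q$-clause needs care (the paper's own write-up is silent on it): the finite factors can be discarded only when $Q$ is torsion-free, and without some such hypothesis the clause genuinely fails --- take $G$ a hyperbolic surface group and $F=S_1$ a proper finite-index normal surface subgroup, so that (i) and the $\aut$/$\out$ conditions hold while $Q$ is a nontrivial finite group. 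So a completed argument should either establish the coordinate analysis above and combine it with torsion-freeness of $Q$ (automatic in the paper's applications), or restrict attention to the $\aut$/$\out$ part of (ii), which is all that is used later; your current plan for this step would not succeed.
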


\begin{proof}

 If (ii) holds then by Lemma \ref{lemFinHolOneFact} there are surface
 groups $\S_i$ with $S_i\unlhd \S_i$ of finite index such that the map $G\to \aut(S_i)$ given by conjugation factors through $G\to \inn(\S_i)\cong\S_i$. We combine these maps with the composition of $G\to Q$ and the embedding of $Q$ to obtain a map $\Phi$ from $G$ to a product of surface groups. The kernel of the map $G\to Q$ is the product of the $S_i$, and each $S_i$ embeds into the coordinate for $\S_i$, so $\Phi$ is injective and (i) is proved.

We shall prove the converse in the surface group case; the other case is entirely
similar. Thus we   assume that $G$ can be embedded in a direct product $\S _1 \times \cdots \times \S_m$ of surface groups.
After projecting away from factors $\S_i$ that have trivial intersection with $G$ and replacing the $\S_i$ with the coordinate projections of $G$,
we may assume that $G\leq \L_1 \times \cdots \times \L_m$ is a full subdirect product, where each $\L_i$ is either a surface group, a nonabelian free group, or $\ZZ$. Note that $G\cap \L_i$ is normal in $\L_i$, since it is normal in $G$
and $G$ projects onto $\L_i$.

By assumption $F=S_1\times \cdots \times S_k$ for some $k$. We want to show that, after reordering factors, $S_i$ is a finite-index normal subgroup of $\L_i$. Denote by $p_i:\L_1\times \cdots \times\L_m\rightarrow \L_i$ the projection onto the $i$th factor. Since each $S_j$ is normal in the subdirect product $G\leq \L_1\times \cdots \times \L_m$ the projections $p_i(S_j)\unlhd \L_i$ are finitely-generated normal subgroups for $1\leq i\leq m$. Since the $\L_i$ are surface groups or free groups, it
follows, each $p_i(S_j)$ is either trivial or of finite index.

Since $S_j$ has no centre, it intersects abelian factors trivially. Suppose $\L_i$
is non-abelian. We claim that if $p_i(S_j)$ is nontrivial, then $S_j\cap \L_i$ is nontrivial. If this were not the case, then the normal subgroups $S_j$ and
$G\cap\L_i$ would intersect trivially in $G$, and hence would commute. But 
this is impossible, because the centraliser in $\L_i$ of the
finite-index subgroup $p_i(S_j)$ is trivial. Since  $S_j$ does not contain $\ZZ^2$, it can intersect only one $\L_i$. After reordering we may assume $i=j$ and that the projection $p_j(S_i)\leq \L_j$ is trivial for $j\neq i$. 

It follows that $F$ is a finite index subgroup of $\L_1 \times \dots \times \L_k$ with $S_i = F\cap \L_i$. In particular, $\L_i$ must be a surface group, and 
the action of $G$ by conjugation on $S_i$ factors through  ${\rm{Inn}}(\L_i)
\to {\rm{Aut}}(S_i)$.

Finally, we claim that $Q$ embeds in $\L_{k+1}\times \dots \times \L_m$. To see this note that since $S_i$ is normal in $\L_i$, $F$ is normal in $\L_1\times \dots \times \L_k$ with finite quotient $U$, say. Thus $Q= G/F$ embeds in $U\times \L_{k+1}\times \dots \times \L_m$, and since $Q$ is torsion-free it intersects $U$ trivially.
\end{proof}

\begin{addendum}
Under the hypotheses of Proposition \ref{propFinHol} the following conditions are equivalent:
\begin{itemize}
\item[(i')] $G$ can be embedded in a direct product of non-abelian limit groups and $\G_{-1}$;

\item[(ii')] $Q$ can be embedded in a direct product of non-abelian limit groups and $\G_{-1}$, and the image of each of the maps
$G\to\aut(S_i)$ is torsion-free and has finite image in $\out(S_i)$.
\end{itemize}
Moreover, if these conditions hold and $Q$ is a subgroup of a direct product of surface groups, then $G$ is also a subgroup of a direct product of surface groups.
\label{addLimit}
\end{addendum}

\begin{proof}
 Following the preceding proof, one sees that $\L_1, \dots, \L_k$ must still be surface groups, because a torsion-free group that contains a surface group of finite index is itself a surface group and any non-trivial finitely generated normal subgroup of a limit group must be of finite index (see \cite[Theorem 3.1]{BriHow-07}).
\end{proof}

\begin{theorem} Let the Kodaira fibrations  $S_{\gamma_i}\hookrightarrow
X_i\to S_{g_i}$ with zero signature be as in the statement of
Theorem \ref{thmSgn0Intro} and assume that each of the maps $\alpha_i:S_{g_i}\to E$ is surjective
on $\pi_1$. Then the following conditions are equivalent:
\begin{enumerate}
\item the K\"ahler group $\pi_1 \overline{H}$ is a subgroup of a direct product of limit groups and $\G_{-1}$;
\item $\pi_1 \overline{H}$ can be embedded in
a direct product of surface groups;
\item each  $\pi_1 X_i$ can be embedded in
a direct product of surface groups; 
\item for each $X_i$, the image of the homomorphism
 $\pi_1X_i \to\aut(\pi_1S_{\gamma_i})$ defined by conjugation is torsion-free.
\end{enumerate}
\label{t:resfree}
\end{theorem}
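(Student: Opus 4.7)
The strategy is to derive both equivalences in parallel by applying Proposition \ref{propFinHol} to two different short exact sequences: first to the fibration sequence of each Kodaira surface $X_i$ (yielding (2) $\Leftrightarrow$ (3)), and then to the defining sequence for $\pi_1\overline{H}$ coming from $\overline{H}\to H$ (yielding (1) $\Leftrightarrow$ (3)). The bulk of the work lies in recognising that the torsion-freeness of conjugation in $\aut(\pi_1 S_{\gamma_i})$ is the \emph{same} condition whether tested on $\pi_1 X_i$ or on $\pi_1\overline{H}$.

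\emph{Direction (2) $\Leftrightarrow$ (3).} I would apply Proposition \ref{propFinHol} to the surface-by-surface sequence
\[
1\to\pi_1 S_{\gamma_i}\to\pi_1 X_i\to\pi_1 S_{g_i}\to 1
\]
induced by the Kodaira fibration $X_i\to S_{g_i}$. The quotient $\pi_1 S_{g_i}$ is itself a surface group, so embeds trivially in a product of surface groups. By Theorem \ref{thm3}, the zero-signature hypothesis forces the monodromy $\pi_1 S_{g_i}\to\out(\pi_1 S_{\gamma_i})$, hence the image of $\pi_1 X_i\to\out(\pi_1 S_{\gamma_i})$, to be finite. Thus condition (ii) of Proposition \ref{propFinHol} collapses to the torsion-freeness of the image in $\aut(\pi_1 S_{\gamma_i})$, which is precisely condition (3).

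\emph{Direction (1) $\Leftrightarrow$ (3).} I would apply Proposition \ref{propFinHol} to the short exact sequence
\[
1\to F\to\pi_1\overline{H}\to\pi_1 H\to 1
\]
extracted in the proof of Theorem \ref{thm4}, where $F=\pi_1 S_{\gamma_1}\times\cdots\times\pi_1 S_{\gamma_r}$ and $H$ is the generic fibre of $h:Z\to E$. Each factor $\pi_1 S_{\gamma_i}$ is normal in $\pi_1 X$ (normal in $\pi_1 X_i$ and centralised by the other factors), hence normal in $\pi_1\overline{H}$. The quotient $\pi_1 H$ embeds in $\pi_1 Z=\pi_1 S_{g_1}\times\cdots\times\pi_1 S_{g_r}$ by Theorem \ref{thmLlI1}, so it already sits in a direct product of surface groups. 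The holonomy $\pi_1 H\to\out(F)$ is finite because componentwise it factors through the finite-image holonomy $\pi_1 S_{g_i}\to\out(\pi_1 S_{\gamma_i})$ provided by Theorem \ref{thm3}.

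The delicate step, and what I expect to be the main obstacle, is the identification of the conjugation image of $\pi_1\overline{H}$ in $\aut(\pi_1 S_{\gamma_i})$ with that of $\pi_1 X_i$. I would verify this by showing that the composition $\pi_1\overline{H}\hookrightarrow\pi_1 X\twoheadrightarrow\pi_1 X_i$ is surjective: given $x_i\in\pi_1 X_i$, the $\pi_1$-surjectivity of each $\alpha_j$ (combined with the $\pi_1$-surjectivity of each bundle projection $k_j$, automatic since the fibres are connected) lets me choose coordinates $x_j$ for $j\neq i$ so that $\sum_j(\alpha_j\circ k_j)_\ast(x_j)=0$ in $\pi_1 E$, producing an element of $\pi_1\overline{H}$ with prescribed $i$-th coordinate. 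Since $\pi_1 S_{\gamma_i}$ is centralised by the other $\pi_1 X_j$, the conjugation action of $\pi_1\overline{H}$ on $\pi_1 S_{\gamma_i}$ factors through and agrees with that of $\pi_1 X_i$. With this identification in hand, Proposition \ref{propFinHol} translates its condition (ii) into condition (3) of the theorem, completing (1) $\Leftrightarrow$ (3).
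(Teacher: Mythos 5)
Your proof is correct and follows essentially the same route as the paper: both equivalences come from Proposition \ref{propFinHol} applied to the two fibration sequences, with the crux being that the conjugation image of $\pi_1\overline{H}$ in $\aut(\pi_1 S_{\gamma_i})$ equals that of $\pi_1 X_i$ — which you get by showing $\pi_1\overline{H}\twoheadrightarrow\pi_1 X_i$ using the $\pi_1$-surjectivity of the $\alpha_j$, exactly the paper's point that $\pi_1 H$ is subdirect in $\prod\pi_1 S_{g_j}$. The only cosmetic difference is that you close the loop via (3)$\Rightarrow$(1) with a second application of Proposition \ref{propFinHol}, whereas the paper notes (2)$\Rightarrow$(1) is immediate since $\pi_1\overline{H}\leq\pi_1 X_1\times\cdots\times\pi_1 X_r$.
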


\begin{proof} The equivalence of (1) and (2) is covered by Addendum \ref{addLimit}. Proposition \ref{propFinHol} establishes the equivalence of (3)
and (4), and (2) is a trivial consequence of (3), so we concentrate on proving that
(2) implies (3). Assume that $\pi_1\overline{H}$ is a subgroup of a direct
product of surface groups.

The fibre of $X=X_1\times \cdots \times X_r\rightarrow S_{g_1}\times \dots \times S_{g_r}$ is $F= S_{\gamma_1}\times \dots \times S_{\gamma_r}$, and 
the restriction
of the fibration gives  $F\hookrightarrow \overline{H} \rightarrow H$.
Each $\pi_1S_{\g_i}$ is normal in both $\pi_1X$ and $\pi_1\overline{H}$. By
 Proposition \ref{propFinHol}, if (2) holds then the image of each of the maps $\phi_i: \pi_1 \overline{H}\to \aut (\pi_1 S_{\g_i})$ 
 given by conjugation is torsion-free, and the image in $\out (\pi_1 S_{\g_i})$
 is finite. The map $\phi_i$ factors
 through $\rho_i: \pi_1 X_i\rightarrow \aut(\pi_1 S_{\g_i})$. Because
  $\pi_1 H\leq \pi_1 S_{g_1}\times \cdots \times \pi_1 S_{g_r}$ is {\em{subdirect}}
  (i.e. maps onto each $S_{g_i}$), the group $\pi_1 \overline{H}\leq \pi_1 X_1 \times \dots \times \pi_1 X_r$ is also subdirect and the image of $\phi_i$ coincides with the image of $\rho_i$. 
  Therefore, the conditions of Proposition \ref{propFinHol} hold
  for each of the fibrations $S_{\gamma_i}\hookrightarrow
X_i\to S_{g_i}$. 
\end{proof}

\bibliography{References}
\bibliographystyle{amsplain}

\end{document}